\begin{document}
\frontmatter
\title[Deformations and degenerations of irregular singularities]{Integrable deformations and degenerations of~some~irregular~singularities}

\author[C.~Sabbah]{Claude Sabbah}
\address[C.~Sabbah]{CMLS, CNRS, École polytechnique, Institut Polytechnique de Paris\\
F--91128 Palaiseau cedex\\
France}
\email{Claude.Sabbah@polytechnique.edu}
\urladdr{http://www.math.polytechnique.fr/perso/sabbah}

\subjclass{34M40, 32C38, 35A27}
\keywords{Irregular singularity, formal decomposition, real blow-up, isomonodromic deformation, Birkhoff problem, Frobenius manifold}

\begin{abstract}
We extend to a degenerate case a result of B.\,Malgrange on integrable deformations of irregular singularities, inspired by an article of G.\,Cotti, B.\,Dubrovin and D.\,Guzzetti \cite{C-D-G17a}. We give an application to integrable deformations of some meromorphic connections in Birkhoff normal form and to the construction of Frobenius manifolds.
\end{abstract}

\maketitle
\vspace*{-2\baselineskip}
\tableofcontents
\mainmatter

\section{Introduction}

\subsection*{Motivations}
Let us consider a family of differential systems of a complex variable~$z$ parametrized by $t$ with matrix $A(t,z)\,\rd z/z$ given by
\begin{equation}\label{eq:system}
A(t,z)=\frac{A_0(t)}z+A_1(t).
\end{equation}
We assume that $A_0(t),A_1(t)$ are square matrices depending holomorphically on the parameter $t\in T$. Assume for example that $A_0(t)$ is non-resonant for generic values of $t$, that is, its eigenvalues are pairwise distinct. Then one can locally take these eigenvalues as parameters, and a suitable base change, formal with respect to $z$ and locally holomorphic with respect to $t$, reduces the system into a diagonal form, so that its $z$-formal solutions are easy to obtain.

If on the other hand some eigenvalues of $A_0(t)$ coincide at some $t_o\in T$, the $z$-formal behaviour of solutions in the neighbourhood of $t_o$ becomes much harder to understand. The holomorphic behaviour of solutions is the subject of singular perturbation theory, and is addressed in an extensive literature, and the question is: how much do the Stokes matrices of the one-variable system at $t=t_o$ determine the Stokes matrices at neighbouring points?

Adding an integrability assumption to \eqref{eq:system} makes the complexity of the problem bounded, in the following sense. If we assume that there exists an integrable meromorphic connection (on the trivial vector bundle) with respect to all variables $z,t$ so that the original system \eqref{eq:system} is the $z$-part of this connection, then it is worthwhile treating all variables on the same footing, allowing meromorphic changes of all the variables, also called complex blowing-ups. The system \eqref{eq:system} can then be reduced to a simpler one after a suitable meromorphic change of variables: this is the content of a fundamental theorem of K.\,Kedlaya \cite{Kedlaya09,Kedlaya10} in the present setting, while one can also refer to the work of T.\,Mochizuki \cite{Mochizuki07b,Mochizuki08} in an algebraic setting (with respect to $z$ and $t$). The price to pay is the introduction of singularities in the set of poles of the system, and the loss of the distinction between the notion of a variable and that of a parameter.\enlargethispage{\baselineskip}%

The integrability property can arise in at least two ways on a given system \eqref{eq:system}.
\begin{itemize}
\item
An integrability property at a generic point of $t$ may propagate all along the set of parameters where the system is defined.
\item
The system \eqref{eq:system} is obtained as the solution to an isomonodromy deformation problem.
\end{itemize}

In the very interesting and inspiring article \cite{C-D-G17a} (\cf also \cite{C-G17b,C-G17a}), G.\,Cotti, B.\,Dubrovin and D.\,Guzzetti have analyzed with much care the case where $T$ parametrizes the eigenvalues of the matrix $A_0$, which is then assumed to be the matrix $\diag(t_1,\dots,t_n)$, and $t_o$ belongs to the union $\Delta$ of diagonal hyperplanes (coalescing eigenvalues). In particular, they have shown how Stokes data at $t_o$ can be extended in some neighbourhood of $t_o$, and they gave applications to Frobenius manifolds.

Our aim is to revisit some of their results from a different point of view, namely that of isomonodromic deformations of irregular differential equations, with the geome\-tric perspective of \cite{Malgrange83dbb}. We will take advantage of the results of K.\,Kedlaya and T.\,Mochizuki mentioned above on meromorphic connections in dimension $\geq2$. In this setting, the behaviour of Stokes matrices is sufficiently well understood after a suitable meromorphic base change of variables, under the name of sheaf of Stokes torsors, which we will explain in Section \ref{subsec:StT}.

\subsection*{The setting}
Let $T=\CC^n$ with coordinates $t_1,\dots,t_n$ and let $X$ be a neighbourhood of $T\times\{0\}$ in $T\times\CC$. We equip the extra factor $\CC$ with the coordinate $z$, and we regard $T\subset X$ as the smooth hypersurface defined by the equation $z=0$.

We consider as a \emph{model system} the system \eqref{eq:system} where the matrix $A_0(t)$ is the block-diagonal matrix $\Lambda(t)$ with diagonal blocks $t_i\id$, and the matrix $A_1(t)$ is constant and block diagonal correspondingly. This system is integrable, and the $T$-component of the connection has a block-diagonal matrix with blocks $(-1/z)\rd t_i\cdot \id$.

We aim at analyzing integrable systems \eqref{eq:system} that are formally isomorphic, as integrable systems, to the model system above. We will keep the formal isomorphism as part of the data, in other words, we are mainly interested in all possible Stokes data that can occur on such a model system.

We will use the language of meromorphic flat bundles, which happens to be more flexible when considering the meromorphic base changes like complex blowing-ups. 

For each $i\in\{1,\dots,n\}$, let us be given a locally free $\cO_X(*T)$-module $\cR_i$ of finite rank, endowed with an integrable meromorphic connection $\nabla$ (an object we call a \emph{$T$\nobreakdash-meromorphic flat bundle}). We assume that $\nabla$ has \emph{regular singularities} along~$T$. We~then set $\cE^{-t_i/z}\otimes\cR_i:=(\cR_i,\nabla-\rd(t_i/z))$, and we consider the model $T$-meromorphic flat bundle
\begin{equation}\label{eq:cN}
\cN:=\bigoplus_{i=1}^n(\cE^{-t_i/z}\otimes\cR_i).
\end{equation}

Let $\cO_{\wh T}$ be the formal completion of $\cO_X$ along $T$ and let us set \hbox{$\cM_{\wh T}:=\cO_{\wh T}\otimes_{\cO_X}\cM$}. We will be concerned with $T$-meromorphic flat bundles $\cM$ endowed with an isomorphism $\iso_{\wh T}:\cM_{\wh T}\isom\cN_{\wh T}$. We note that a morphism $(\cM_1,\iso_{\wh T})\to(\cM_2,\iso_{\wh T})$ between such objects (with the obvious definition) is uniquely determined by the associated formal morphism $\cM_{1\wh T}\to\cM_{2\wh T}$, and there is at most one isomorphism between two objects.

For $t_o\in T$, we denote by $\gamma_{t_o}:\{t_o\}\times(\CC_z,0)\hto X$ the inclusion. Then $\gamma_{t_o}^*\cN$ is a free $\cO_{\CC_z,0}(*0)$-module that can be endowed with the pullback connection, which we denote by $\gamma_{t_o}^+\cN$, and it has a form similar to \eqref{eq:cN} by replacing~$t_i$ with the constant~$t_{o,i}$ and $\cR_i$ with its restriction $\gamma_{t_o}^+\cR_i$. However some of the $t_{o,i}$ may coincide, which leads to grouping the corresponding $\gamma_{t_o}^+\cR_i$. The~most degenerate case is when $t_{o,i}=0$ for all $i$ or, similarly, when all $t_{o,i}$ coincide.

The~nondegenerate case is when $t_o$ does not belong to the union $\Delta$ of the diagonal hyper\-planes in $T$, \ie when the $t_{o,i}$ are pairwise distinct, which is the case considered in \cite{J-M-U81} as well as in \cite{Malgrange83dbb}. In such a case, for any simply connected open subset $U$ of $T\moins\Delta$ and for any $t_o\in U$, the restriction $\gamma_{t_o}^+$ induces a bijection between the set of isomorphism classes of pairs $(\cM_U,\iso_{\wh U})$ and pairs $(\cM^{t_o},\iso_{\wh0})$ on $\CC_z$. This is more classically expressed in terms of Stokes matrices. From the point of view developed here, we interpret this result by saying that, on a simply connected set $U$, giving a global section of a local system (the sheaf of Stokes torsors) is equivalent to giving a germ of section at one point of $U$. Our aim is to show that, for~$\cN$ as in \eqref{eq:cN}, a similar result holds for any $t_o\in \Delta$.

The set $\Delta$ is naturally stratified: the stratum of a point is defined by specifying the precise sets of coordinates that coincide at this point. Given $t_o\in\Delta$, we denote by $S(t_o)$ its stratum. Its closure is a linear subspace of $T$. 

\subsection*{The results}

\begin{theoreme}\label{th:extension}
Let $S_o$ be a stratum of $\Delta$ and let $U$ be an open subset of $T$ such that
\begin{enumeratea}
\item\label{th:extensiona}
$U\cap S_o$ is simply connected,
\item\label{th:extensionb}
$U$ is star shaped with respect to $U\cap S_o$ (\cf Definition \ref{def:starshaped}),
\item\label{th:extensionc}
for every stratum $S$ such that $\ov S\cap S_o=\emptyset$, we also have $U\cap\ov S=\emptyset$.
\end{enumeratea}
Then for any $t_o\in S_o$, the restriction~$\gamma_{t_o}^+$ induces a bijection between the set of isomorphism classes of pairs of $(\cM,\iso_{\wh T})$ defined on $U$ and that of pairs $(\gamma_{t_o}^+\cM,\iso_{\wh0})$.
\end{theoreme}

We can interpret Theorem \ref{th:extension} in two ways.

\begin{corollaire}\label{cor:extension}\mbox{}
\begin{enumerate}
\item
Any pair $(\cM^{t_o},\iso_{\wh 0})$ with formal model $\gamma_{t_o}^+\cN$ on $(\CC_z,0)$ can be extended as a pair $(\cM,\iso_{\wh T})$ on any open set $U$ satisfying \ref{th:extension}\eqref{th:extensiona}--\eqref{th:extensionc}.
\item
Any $(\cM,\iso_{\wh T})_V$ defined on a connected open set $V$ of $T$ is uniquely determined by any of its restrictions $\gamma_{t_o}^+(\cM,\iso_{\wh T})$ at $t_o\in V$.
\end{enumerate}
\end{corollaire}

We will see (Corollary \ref{cor:uniquenessglobalI}) that the second part of the corollary holds in a much more general setting, according to recent results of J.-B.\,Teyssier \cite{Teyssier17}.

As a special case, let us assume that $t_o$ belongs to the smallest stratum, where all coordinates coincide. Then $\gamma_{t_o}^+\cN$ has a regular singularity at $0$ up to an exponential twist, and any $(\cM^{t_o},\iso_{\wh 0})$ is isomorphic to $(\gamma_{t_o}^+\cN,\id)$. We deduce that any $(\cM,\iso_{\wh T})$ defined everywhere on $T$ is isomorphic to $(\cN,\id)$, since $S(t_o)\simeq\CC$ is simply connected and we can apply the theorem with $U=T$.

The theorem has a generalization as follows. Let $Y$ be a $1$\nobreakdash-con\-nec\-ted complex manifold and let $f=(f_1,\dots,f_n):Y\to T$ be a holomorphic mapping. We still denote by $f$ the map $f\times\id:Z:=Y\times\CC\to T\times\CC=X$ when the context is clear. There is no loss of generality in assuming that $f^{-1}(\Delta)$ is a hypersurface in $Y$ (otherwise, one can replace $n$ with a smaller $n'$). Then a $Y$-meromorphic flat bundle on $Z$ with regular singularities is nothing but the pullback of a $T$-meromorphic flat bundle on $X$ with regular singularities, and a $Y$-meromorphic flat bundle of the form
\[
\cN_Y=\bigoplus_{i=1}^n(\cE^{-f_i(y)/z}\otimes\cR_{Y,i})
\]
is isomorphic to $f^+\cN$, for $\cN$ as in \eqref{eq:cN} with $\cR_i$ such that $\cR_{Y,i}=f^+\cR_i$ (where $f^+$ is the pullback $f^*$ of $\cO$-modules together with the pullback connection). How much do this property and Theorem \ref{th:extension} extend to $Y$-meromorphic flat bundles formally modelled on $\cN_Y$?

\begin{notation}\label{nota:connect}
Let $y_o\in Y$ and set $t_o=f(y_o)$, contained in the stratum $S_o$ of $\Delta$. Given an open connected neighbourhood $U$ of $t_o$ in $T$, we denote by $f^{-1}(U)^{y_o}$ the connected component of $f^{-1}(U)$ containing~$y_o$.
\end{notation}

\begin{corollaire}\label{cor:extensionf}
With the previous notation, if $U\subset T$ satisfies the assumptions as in Theorem \ref{th:extension}, the restriction $\gamma_{y_o}^+$ (\resp the pullback $f^+$) induces a bijection between the set of isomorphism classes of pairs $(\cM_{f^{-1}(U)^{y_o}},\iso_{\wh{f^{-1}(U)^{y_o}}})$ (\ie defined on $f^{-1}(U)^{y_o}$) and that of pairs $(\cM^{y_o},\iso_{\wh0})$ (\resp with that of pairs $(\cM_{U},\iso_{\wh U})$).
\end{corollaire}

The proofs of Theorem \ref{th:extension} and Corollaries \ref{cor:extension} and \ref{cor:extensionf} are given in Section \ref{sec:proofs}, where we also interpret them in terms of the sheaf of Stokes torsors (Corollary \ref{cor:Stcomp}). The notion of very good formal decomposition, as well as the main properties we use, are developed in Section~\ref{sec:verygood}. In Section~\ref{sec:isomono}, we give an application to deformations of a connection in the Birkhoff normal form and to the construction of Frobenius manifolds, in the spirit of \cite{C-D-G17a,C-D-G17b,C-G17b,C-G17a}.

\subsubsection*{Acknowledgements}
This work has benefitted from fruitful discussions with Giordano Cotti and Jean-Baptiste Teyssier. In particular, the uniqueness results of Section \ref{subsec:uniqueness}, originally obtained in some special cases, are due in this generality to Jean-Baptiste Teyssier in \cite{Teyssier17}, in an even stronger form. We thank the anonymous referee for useful suggestions for improving the presentation of the article.

\section{General results on the notion of very good formal decomposition}\label{sec:verygood}

The objects considered in this section are meromorphic bundles on a complex manifold, with poles along a divisor having normal crossing, and endowed with a flat meromorphic connection with poles at most along the same divisor. Moreover, we assume that they are isomorphic, in the formal neighbourhood of the divisor, to a simpler model, which we fix, and we wish to classify pairs consisting of a meromorphic flat bundle and a formal isomorphism with this model, up to isomorphism. When the model is \emph{good}, then in the neighbourhood of a point on the divisor, such pairs are uniquely determined by their restriction to a slice that is transversal to the natural stratum of the divisor passing through this point (Proposition \ref{prop:locconstcodimtwo}). We also prove a global uniqueness result (Corollary \ref{cor:uniquenessglobal}) obtained differently by J.-B.\,Teyssier: if the divisor is connected, then a pair is uniquely determined by its restriction to a generic smooth curve transverse to the divisor at a smooth point. This result is useful when combined with the Kedlaya-Mochizuki theorem, as it has consequences when the model is not good (Corollary \ref{cor:uniquenessglobalI}). The techniques that we use mix a variant of the Malgrange-Sibuya theorem and recent results by T.\,Mochizuki on Stokes-filtered local systems on the oriented real blow-up along the divisor.

\subsection{Setting and notation}\label{subsec:settingnotation}
Throughout this section, $X$ denotes a germ of complex analytic manifold of dimension $m$ along a \emph{connected} reduced divisor $D\subset X$ with normal crossings, whose components are denoted by $D_{i\in I}$. We~will assume, for the sake of simplicity, that these components are \emph{smooth}. We~denote by $\cO_X(*D)$ the sheaf of meromorphic functions with poles on $D$ at most. Let $U$ be an open set in $X$ and $x_o\in U$. We~denote by $D_1,\dots,D_\ell$ the components of~$D$ passing through $x_o$, and we choose local coordinates $(x_1,\dots,x_m)$ such that $D_i=\{x_i=0\}$. The divisor $D$ has a natural stratification by locally closed smooth connected complex submanifolds and, for $x_o\in D$, we denote by $D^{(x_o)}$ the stratum passing through $x_o$ and by $D_{x_o}$ the germ of $D$ at $x_o$. With the above notation, it is equal to the connected component of $(\bigcap_{i=1,\dots,\ell}D_i)\moins(\bigcup_{j\neq1,\dots,\ell}D_j)$ containing $x_o$.

\subsection{Good sets of polar parts}

The exponential behaviour of horizontal holomorphic sections of meromorphic connections is governed by exponential factors that are polar parts of meromorphic functions along $D$, that is, sections of the sheaf $\cO_X(*D)/\cO_X$. In higher dimensions, the asymptotic behaviour of such exponential factors can be complicated (the geometry of the sectors on which they have rapid decay or exponential growth can be complicated, even if $D$ has normal crossings), and it is useful to select a class of such polar parts for which the sectors have a simple geometry. This leads to the goodness property.

\begin{definition}[Goodness]\label{def:goodness}\mbox{}
\begin{enumerate}
\item\label{def:goodness1}
A nonzero germ $\varphi\in\cO_{X,x_o}(*D)/\cO_{X,x_o}$ is \emph{purely monomial} if some (or any) local representative in $\cO_{X,x_o}(*D)$ can be written as $u(x)/x^\bmm$, where $\bmm\in\NN^\ell\moins\{0\}$, and $u(x)$ is holomorphic and nonvanishing at $x_o$.
\item\label{def:goodness1b}
For $\varphi\in\Gamma(U,\cO_X(*D)/\cO_X)$, we say that $\varphi$ is \emph{good} if its germ at any $x_o\in U$ is purely monomial.
\item\label{def:goodness2}
A finite set $\Phi$ of polar parts $\varphi\in\Gamma(U,\cO_X(*D)/\cO_X)$ is \emph{good} if, for every pair \hbox{$(\varphi,\psi)\in\Phi^2$} with $\varphi\neq\psi$, the difference $\varphi-\psi$ is good.
\end{enumerate}
\end{definition}

\begin{convention}\label{conv:Phi}
For a finite subset $\Phi_{x_o}\subset\cO_{X,x_o}(*D)/\cO_{X,x_o}$, there exists a fundamental basis of Stein open neighbourhoods $V$ of $x_o$ such that each element of $\Phi_{x_o}$ is the germ at $x_o$ of a unique element of $\Gamma(V,\cO_X(*D))/\Gamma(V,\cO_X)$. We abuse the notation by considering $\Phi_{x_o}$ as a subset of the latter quotient, and also by denoting with the same letter an element of this subset and any of its lifts in $\Gamma(V,\cO_X(*D))$. For $\varphi\in\Phi_{x_o}$, it is then meaningful to say $\varphi$ vanishes along some connected component of the smooth part of $D$, or that $\varphi$ has no pole along some component of $D$. We can also consider the germ of $\varphi$ at any $x\in D$ in some neighbourhood of $x_o$.
\end{convention}

Let $x_o\in D$ and let $\Phi_{x_o}$ be a good (finite) set in $\cO_{X,x_o}(*D)/\cO_{X,x_o}$. We denote by $D(\Phi_{x_o})$ the union of (germs of) components of $D$ at $x_o$ along which at least some nonzero difference $\varphi-\psi$, with $\varphi,\psi\in\Phi_{x_o}$, has a pole.

Note that if $\Phi_{x_o}$ is good then for any $\varphi_o\in\Phi_{x_o}$, the set $\Phi_{x_o}-\varphi_o$ is also good, and, moreover, contains~$0$.

Let us fix $\varphi_o\in\Phi_{x_o}$. Recall that goodness then implies that the pole divisors of the elements $\varphi-\varphi_o$ in $\Phi_{x_o}-\varphi_o$ are totally ordered. Let us make this explicit in coordinates. By goodness, any $\varphi\!-\!\varphi_o\!\in\!(\Phi_{x_o}\!-\!\varphi_o)\moins\{0\}$ can be written as\vspace*{-3pt}
\begin{equation}\label{eq:mvarphi}
\varphi-\varphi_o=u_\varphi\cdot\bmx^{-\bmm_\varphi}:=u_\varphi x_1^{-m_1}\cdots x_\ell^{-m_\ell},
\end{equation}
with $\bmm_\varphi\in\NN^\ell\moins\{0\}$ and $u_\varphi$ is an invertible holomorphic function. The goodness condition implies that the set $\{\bmm_\varphi\mid\varphi\in\Phi_{x_o}\moins\{\varphi_o\}\}$ is totally ordered with respect to the partial order on $\NN^\ell$.

If there exist $\varphi\neq\psi$ in $\Phi_{x_o}$ such that $\varphi-\psi$ has no pole along some component of~$D$ at $x_o$, it will be convenient to consider the \emph{level structure} of $\Phi_{x_o}$ as defined now. Although the family $(\bmm_\varphi)_{\varphi\in\Phi_{x_o}}$ depends on the choice of the base point~$\varphi_o$, its maximum~$\bmm_o$ does not. For any $\varphi\in\Phi_{x_o}$, using the notation of \eqref{eq:mvarphi} we~set
\begin{equation}\label{eq:CPhi}
c(\varphi,\varphi_o)=
\begin{cases}
u_\varphi(x_o)&\hspace*{-2mm}\text{if }\bmm_\varphi=\bmm_o,\\
0&\hspace*{-2mm}\text{if }\bmm_\varphi<\bmm_o,
\end{cases}
\quad\text{and}\quad C(\varphi_o)=\{c(\varphi,\varphi_o)\mid\varphi\in\Phi_{x_o}\}\subset\CC.
\end{equation}

\begin{lemme}\label{lem:Cgeq2}
If $\#\Phi_{x_o}\geq2$, there exists $\varphi_o\in\Phi_{x_o}$ such that $\#C(\varphi_o)\geq2$.
\end{lemme}

\begin{proof}
If for the chosen $\varphi_o$ we have $\#C(\varphi_o)=1$, then for any $\varphi'_o\in\Phi_{x_o}$ such that $\bmm_{\varphi'_o}=\bmm_o$, we have $\#C(\varphi'_o)=2$. Indeed, we then have $C(\varphi'_o)=-C(\varphi_o)\cup\{0\}$.
\end{proof}

Assume $\#C(\varphi_o)\geq2$. We then obtain a nontrivial decomposition
\begin{equation}\label{eq:leveldec}
\Phi_{x_o}=\bigsqcup_{c\in C(\varphi_o)}\Phi_{x_o}(\varphi_o,c),\quad\Phi_{x_o}(\varphi_o,c)=\{\varphi\in\Phi_{x_o}\mid c(\varphi,\varphi_o)=c\}.
\end{equation}
Let $\bmm'_o$ denote the submaximal value of the sequence $(\bmm_\varphi)_{\varphi\in\Phi_{x_o}}$ (it may depend on the choice of $\varphi_o$). Then $\Phi_{x_o}(\varphi_o,c)$ is the inverse image in $\Phi_{x_o}-\varphi_o$ of $c/x^{\bmm_o}$ by the map induced by $\cO_{X,x_o}(*D)/\cO_{X,x_o}\to\cO_{X,x_o}(*D)/x^{-\bmm'_o}\cO_{X,x_o}$.

\begin{definition}[Level decomposition (first step)]\label{def:leveldec}
The decomposition \eqref{eq:leveldec} is called the first step of the level decomposition of $\Phi_{x_o}$ with base point $\varphi_o$.
\end{definition}

Every $\Phi_{x_o}(\varphi_o,c)$ is good, so that we can perform the same construction to it and get the complete level decomposition, which we will not define, since we will argue by induction only step by step.

\begin{remarque}
The above notions can be defined similarly along the stratum $D^{(x_o)}$, and then they restrict to the previous ones at $x_o$ (or at any point of the stratum). We then use the notation $C(x_o,\varphi_o)$ and $\Phi(x_o,\varphi_o,c)$.
\end{remarque}

\subsection{Classes of $D$-meromorphic flat bundles}\label{subsec:Nmarked}

Let $\cI_D$ be the reduced ideal of~$D$. We will set $X^*\!=\!X\!\moins\!D$ and, for any subset $J\!\subset\!I$, $D_J:=\bigcap_{i\in J}D_i$ and $D_J^\circ:=D_J\moins\bigcup_{i\notin J}D_i$. We denote by $\cO_{\wh D}$ the formal completion $\varprojlim_k\cO_X/\cI_D^k$, which we regard in an obvious way as a sheaf on $D$. Recall (\cf\eg\cite[Lem.\,I.1.1.13]{Bibi97}) that a section $f$ of $\cO_{\wh D}$ at a point $x_o\in D$ where the components of $D$ are $D_1,\dots,D_\ell$ consists of the data $(f_i)_{i\in\{1,\dots,\ell\}}$ of sections $f_i$ of $\cO_{\wh D_i}$ such that $f_i$ and $f_j$ coincide on $\cO_{\wh{D_i\cap D_j}}$ for all pairs $i,j=1,\dots,\ell$. In particular, $\cO_{\wh D}$ is naturally endowed with a differential~$\rd$, extending $\rd$ on $\cO_{X|D}$.

For an $\cO_X$-module~$\cM$, we denote by $\cM_{|D}$ its sheaf-theoretic restriction to $D$ and we set $\cM_{\wh D}:=\cO_{\wh D}\otimes_{\cO_{X|D}}\cM_{|D}$. By a \emph{$D$-meromorphic flat bundle} we mean a locally free $\cO_X(*D)$-module of finite rank endowed with an integrable connection. For such an~$\cM$, we say that $\cM_{\wh D}$ is a $D$-meromorphic formal flat bundle: it is $\cO_{\wh D}(*D)$-locally free of finite rank with an integrable connection.

Let us fix a $D$-meromorphic flat bundle $\cN$ on $X$.

\begin{definition}
Let $\cM$ be a coherent $\cO_X(*D)$-module an with integrable connection. We say that \emph{$\cM$ has $\cN$ as a $D$-formal model} if there exists an isomorphism \hbox{$\iso_{\wh D}:\cM_{\wh D}\isom\cN_{\wh D}$}.
\end{definition}

Note that $\cN$ has $\cN$ as a $D$-formal model when equipped with $\id:\cN_{\wh D}\isom\cN_{\wh D}$. If $\cM$ has $\cN$ as a $D$-formal model, then $\cM$ is also a $D$-meromorphic flat bundle, \ie it is $\cO_X(*D)$-locally free of finite rank. This justifies the terminology of \cite{Teyssier16} that $(\cM,\iso_{\wh D})$ is an \emph{$\cN$-marked $D$-meromorphic flat bundle}.

We define the category of $\cN$-marked $D$-meromorphic flat bundles in an obvious way: a morphism $\lambda:(\cM,\iso_{\wh D})\to(\cM',\iso_{\wh D}')$ is a morphism $\cM\to\cM'$ such that $\iso_{\wh D}'\circ\lambda_{\wh D}=\iso_{\wh D}$. If $(\cM,\iso_{\wh D})$ and $(\cM',\iso_{\wh D}')$ are isomorphic by an isomorphism~$\iota$, then such an isomorphism $\iota$ is unique, since $\iota_{\wh D}$ is uniquely determined. When $(X,D)=(\CC,0)$, the interest of considering such pairs has been emphasized by B.\,Malgrange in \cite{Malgrange83bb}.

For any open set $U\subset D$, we denote by $\cH(U,\cN)$ the set of isomorphism classes of pairs $(\cM_U,\iso_{\wh D})$ consisting of a (germ of) $D$-meromorphic flat bundle $\cM_U$ on some open neighbourhood of~$U$ in $X$ and a formal isomorphism $\iso_{\wh D}$ with $\cN_{\wh D}$ on~$U$. Owing to the uniqueness of isomorphisms between such pairs, we deduce that the presheaf $U\mto\cH(U,\cN)$ is a sheaf of sets $\cH_D(\cN)$ with a given section $(\cN,\id)$, and sections on $D$ of this sheaf consist of pairs $(\cM,\iso_{\wh D})$ as above (up to unique isomorphism).

The basic operations we will use are the complex blowing-ups, or sequences of such, and more generally proper modifications. A proper modification of $X$ is a proper morphism $X'\to X$ of complex manifolds inducing an isomorphism between open dense subsets of these manifolds. Since we consider pairs $(X,D)$ of manifolds endowed with a normal crossing divisor, we extend this notion as follows: a proper modification $e:(X',D')\to(X,D)$ is a proper modification $\modif:X'\to X$ such that $D'$ is contained $\modif^{-1}(D)$ and which induces an isomorphism $X\moins \modif^{-1}(D)\to X\moins D$. When $D'=\modif^{-1}(D)$, we can compare the sheaves $\cH_D(\cN)$ and $\cH_{D'}(\modif^+\cN)$.

\begin{lemme}\label{lem:modifHD}
Let $\modif:(X',D')\to(X,D)$ be a proper modification such that $D'=\modif^{-1}(D)$. Then $\modif^+:\cH_D(\cN)\to\modif_*\cH_{D'}(\modif^+\cN)$ is an isomorphism, having $\modif_+$ as its inverse.
\end{lemme}

\begin{proof}
It is known that $(\modif^+,\modif_+)$ (where $\modif_+$ is the pushforward $\modif_*$ of $\cO_{X'}(*D')$-modules with the pushforward connection) forms a pair of quasi-inverse functors between the categories of $D$- (\resp $D'$-) meromorphic flat bundles on $X$ (\resp $X'$). On the other hand, by right exactness of the tensor product, we have $\cO_{\wh D'}=\modif^*\cO_{\wh D}$. We thus have a functorial isomorphism of $\cO_{\wh D}$-modules, compatible with connections
\[
\modif_*(\cO_{\wh D'}\otimes\cM')\simeq R^0\modif_*(\modif^*\cO_{\wh D}\otimes\modif^*(\modif_*\cM'))\simeq\cH^0(\cO_{\wh D}\otimes \bR\modif_*\modif^*(\modif_*\cM')\simeq\cO_{\wh D}\otimes(\modif_*\cM').
\]
The result follows.
\end{proof}

Recall (\cf\cite[Th.\,5.5]{Malgrange04}) that, if a closed analytic subset $S\subset D$ has codimension $\geq2$ in $D$, then the restriction functor, from the category of $D$-meromorphic flat bundles on $X$ to that of $(D\moins S)$-meromorphic flat bundles on $X\moins S$, is an equivalence of categories. The next proposition shows that a~similar result holds for $\cN$-marked $D$-meromorphic flat bundles.

\begin{proposition}\label{prop:Malgrangeextension}
The restriction functor, from the category of $\cN$-marked $D$\nobreakdash-meromor\-phic flat bundles on $X$ to that of $\cN$-marked $(D\moins S)$-meromorphic flat bundles on $X\moins S$, is an equivalence of categories.
\end{proposition}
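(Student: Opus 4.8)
The goal is to promote the classical Malgrange-type extension result (recalled from \cite{Malgrange04}, \hbox{Th.\,5.5}) for $D$-meromorphic flat bundles across a closed analytic subset $S\subset D$ of codimension $\geq2$ in $D$ to the category of $\cN$-marked objects. The plan is to reduce everything to the unmarked statement, which is already available, by carefully tracking the formal isomorphism $\iso_{\wh D}$ under restriction and extension. Write $j:X\moins S\hto X$ for the open inclusion and denote restriction to $X\moins S$ by $j^*$. I would first observe that an $\cN$-marked object is a $D$-meromorphic flat bundle $\cM$ together with the extra datum $\iso_{\wh D}$, and a morphism of marked objects is a morphism of the underlying bundles compatible with the markings; moreover, by the uniqueness statement proved earlier (an isomorphism between marked pairs is unique, and a morphism is determined by its formal counterpart), the marked category is a faithful refinement of the unmarked one.

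First I would prove \emph{essential surjectivity}. Start from an $\cN$-marked $(D\moins S)$-meromorphic flat bundle $(\cM',\iso_{\wh D}')$ on $X\moins S$. By the unmarked equivalence of \cite{Malgrange04}, the underlying bundle $\cM'$ extends uniquely (up to canonical isomorphism) to a $D$-meromorphic flat bundle $\cM$ on $X$ with $j^*\cM\simeq\cM'$. It remains to extend the formal marking. Here the key point is that $\cO_{\wh D}$ is a sheaf on $D$ and, since $S$ has codimension $\geq2$ in $D$, Hartogs-type / normality arguments for the coherent (indeed locally free) $\cO_{\wh D}(*D)$-modules $\cM_{\wh D}$ and $\cN_{\wh D}$ let one extend the isomorphism $\iso_{\wh D}'$, \emph{a priori} defined over $D\moins S$, across $S$. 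Concretely, $\iso_{\wh D}'$ is a horizontal section of the internal Hom sheaf $\cHom(\cM_{\wh D},\cN_{\wh D})$ over $D\moins S$; this Hom sheaf is itself a locally free $\cO_{\wh D}(*D)$-module, and extension of its sections over a codimension-$\geq2$ subset follows from the corresponding property of $\cO_{\wh D}$ (which reduces, component by component via the description of $\cO_{\wh D}$ in terms of the $\cO_{\wh D_i}$, to ordinary Hartogs extension for holomorphic functions in the transverse slice). Flatness of the extended morphism and the fact that it is an isomorphism both hold because they hold on the dense open $D\moins S$ and the relevant sheaves are locally free, hence have no sections supported on $S$.

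Next I would prove \emph{full faithfulness}. Given two $\cN$-marked bundles $(\cM_1,\iso_{1,\wh D})$ and $(\cM_2,\iso_{2,\wh D})$ on $X$, a morphism of the restrictions is a morphism $\lambda':j^*\cM_1\to j^*\cM_2$ with $\iso_{2,\wh D}'\circ\lambda_{\wh D}'=\iso_{1,\wh D}'$. By the unmarked equivalence, $\lambda'$ extends uniquely to $\lambda:\cM_1\to\cM_2$ (both as $\cO_X(*D)$-modules and compatibly with connections). I must check the extended $\lambda$ still respects the markings, i.e.\ $\iso_{2,\wh D}\circ\lambda_{\wh D}=\iso_{1,\wh D}$ over all of $D$. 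Both sides are morphisms $\cM_{1,\wh D}\to\cN_{\wh D}$ that agree over $D\moins S$; since $\cHom(\cM_{1,\wh D},\cN_{\wh D})$ is locally free over $\cO_{\wh D}(*D)$ and $S$ has codimension $\geq2$, two such sections agreeing on $D\moins S$ agree everywhere (no nonzero section is supported on $S$). Conversely every marked morphism restricts to a marked morphism, and injectivity on morphisms is immediate from the unmarked faithfulness. Combining essential surjectivity with full faithfulness gives the equivalence.

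The main obstacle, and the step deserving the most care, is the extension of the \emph{formal} marking across $S$: one is extending sections of sheaves of $\cO_{\wh D}(*D)$-modules rather than of honest holomorphic functions, so I must justify a Hartogs-type phenomenon at the formal level. This rests on two ingredients already in play: the gluing description of $\cO_{\wh D}$ via the data $(f_i)$ on the $\cO_{\wh D_i}$ satisfying the compatibility on double intersections (recalled from \cite[Lem.\,I.1.1.13]{Bibi97}), which reduces the formal extension to a finite collection of ordinary extensions along the smooth strata $D_i\moins S$; and the local freeness of $\cM_{\wh D}$, $\cN_{\wh D}$, and hence of their internal $\cHom$, which guarantees there are no sections (or coincidences) hidden in codimension $\geq2$. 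Once these are in place, the rest is a formal bookkeeping exercise combining the unmarked equivalence of \cite{Malgrange04} with the uniqueness of marked isomorphisms.
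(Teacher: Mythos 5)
Your proposal is correct and follows essentially the same route as the paper: reduce to the unmarked equivalence of Malgrange, then extend the formal marking across $S$ by a Hartogs argument for $\cO_{\wh D}$, using the description of a section of $\cO_{\wh D}$ as compatible data $(f_i)$ on the $\cO_{\wh D_i}$ and applying Hartogs to the coefficients of the formal series on $D_i\moins S$ (which has codimension $\geq2$ in $D_i$). The paper phrases this more tersely as showing that $\cO_{\wh D}\to j_*\cO_{\wh{D\moins S}}$ is an isomorphism after passing to polar coefficients, but the content is the same.
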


\begin{proof}
By the result of B.\,Malgrange aforementioned, we are reduced to proving essential surjectivity, that is, given $D$-meromorphic flat bundles $\cM,\cN$, any formal isomorphism $\iso_{\wh{D\moins S}}:\cM_{\wh{D\moins S}}\simeq\cN_{\wh{D\moins S}}$ extends as an isomorphism \hbox{$\iso_{\wh D}:\cM_{\wh D} \simeq\cN_{\wh D}$}. This is a local question on $S$, and we can assume that $\cM$ and $\cN$ are $\cO_X(*D)$-free. The components of $\iso_{\wh{D\moins S}}$ on bases of $\cM,\cN$ are sections of $\cO_{\wh{D\moins S}}(*(D\moins S))$. By considering the polar coefficients, we are led to showing that, if $j:D\moins S\hto D$ denotes the inclusion, the natural monomorphism $\cO_{\wh D}\to j_*\cO_{\wh{D\moins S}}$ is an isomorphism.

Let $f=(f_i)$ be a section of $\cO_{\wh{D\moins S}}$ on $\nb_D(x_o)\moins S$ for some $x_o\in S$. We simply write $D=\nb(x_o)$. Since $S$ has codimension $\geq2$ in each $D_i$, any section $f_i$ of $\cO_{\wh{D_i\moins S}}$ extends in a unique way as a section of $\cO_{\wh{D_i}}$, as seen by applying Hartogs's theorem to the coefficients $f_{i,k}\in\cO(D_i\moins S)$ of the formal series $f_i=\sum_{k\geq0}f_{i,k}x_i^k$. At a point of $D_i\cap D_j\cap S$, the coefficients $f_{i,k,\ell},f_{j,k,\ell}\in\cO((D_i\cap D_j)\moins S)$ of $f_i,f_j$ on the monomial $x_i^kx_j^\ell$ coincide on their domains, and since $D_i\cap D_j\cap S$ has codimension $\geq1$ in $D_i\cap D_j$, they coincide everywhere on $D_i\cap D_j$. As a consequence, the section $f$ of $\cO_{\wh{D\moins S}}$ extends (in a unique way) as a section of $\cO_{\wh D}$ on $\nb_D(x_o)$, as wanted.
\end{proof}

On the other hand, extension of morphisms can be done in codimension one with respect to $D$, according to Hartogs's theorem. Assume for example that $D$ is smooth, and let $H\subset D$ be a codimension-one closed analytic subset. Let us denote by $j:U=D\moins H\hto D$ the open inclusion. For a $D$-meromorphic flat bundle $\cM$ on $X$, we denote by $\cM_{|U}$ its sheaf-theoretic restriction to $U\subset X$.

\begin{lemme}\label{lem:Hartogs}
Let $\cM,\cM'$ be $D$-meromorphic flat bundles on $X$. Then, under the~above assumptions, any morphism $f_U:\cM_{|U}\to\cM'_{|U}$ extends in a unique way as a morphism $f:\cM\to\cM'$. If $(\cM,\iso_{\wh D}),(\cM',\iso'_{\wh D})$ are $\cN$-marked $D$-meromorphic flat bundles, any morphism $f_U:(\cM,\iso_{\wh D})_{|U}\to(\cM',\iso'_{\wh D})_{|U}$ extends in a unique way as a morphism $f$ between these pairs. In both cases, if $f_U$ is an isomorphism, then so is~$f$.
\end{lemme}

\begin{proof}
By the uniqueness assertion, the question is local on $D$ and since $D$ is smooth, we can assume that $X=D\times(\CC,0)$ with $D$ simply connected. Since $\cM,\cM'$ are flat bundles on $X\moins D$ and $\pi_1(X\moins D)=\pi_1(\CC\moins\{0\})$, we can assume that $X=D\times B_\epsilon$, where $B_\epsilon$ is the open disc of radius $\epsilon>0$ centered at the origin in $\CC$. By horizontality, $f_U$ is defined on $U\times B_\epsilon$.

For the same reason, the sheaf $\cHom^\nabla(\cM,\cM')_{X\moins D}$ is a locally constant sheaf on $X\moins D=D\times B^*_\epsilon$ ($B^*_\epsilon=B_\epsilon\moins\{0\}$). It follows that any section of this sheaf on $\{x\}\times B^*_\epsilon$ uniquely extends as a global section. In particular $f_U$ uniquely extends to $X\moins D$.

We now apply Hartogs theorem. Let us fix bases of $\cM,\cM'$ as $\cO_X(*D)$-modules (recall that we work locally on $D$). We have obtained a morphism $\cM_{X\moins H}\to\cM'_{X\moins H}$ extending $f_U$. In the chosen bases, the entries of the matrix of this morphism are holomorphic functions on $X\moins H$. Since $H$ has codimension two in $X$, they extend (in a unique way) as holomorphic functions on $X$, hence the first statement of the lemma. The other statements are then straightforward.
\end{proof}

\begin{corollaire}\label{cor:Hartogs}
Under the above assumptions, the natural morphism $\cH_D(\cN)\to j_*j^{-1}\cH_D(\cN)$ is injective.
\end{corollaire}

\begin{proof}
The question is local at points of $H$ and we can assume that $D$ is a small open neighbourhood of such a point. We are thus reduced to proving that if two pairs $(\cM,\iso_{\wh D}),(\cM',\iso'_{\wh D})$ are isomorphic on $D\moins H$, they are isomorphic. But by the previous lemma, the isomorphism on $D\moins H$ lifts in a unique way as an isomorphism on $D$.
\end{proof}

\subsection{Very good formal decomposition}
By a \emph{good decomposable $D$-meromorphic flat bundle} we mean a locally free $\cO_X(*D)$-module $\cM^\good$ with integrable connection $\nabla^\good$ (that we usually omit to mention), which is globally (on~$X$) isomorphic as such to a direct sum
\begin{equation}\label{eq:gooddecbdle}
\cM^\good\simeq\bigoplus_{\varphi\in\Phi}(\cE^\varphi\otimes\cR_\varphi),
\end{equation}
where
\begin{itemize}
\item
$\Phi$ is a finite subset of $\Gamma(X,\cO_X(*D)/\cO_X)$ which is \emph{good} at every point $x_o\in D$,
\item
$\cR_\varphi$ is a $D$-meromorphic flat bundle having regular singularities along $D$,
\item
$\cE^\varphi:=(\cO_X(*D),\rd+\rd\varphi)$.\footnote{This definition is local on $X$, since $\varphi$ lifts to $\wt\varphi\in\Gamma(X,\cO_X(*D))$ if $X$ is Stein. Otherwise, on a Stein open covering $(U_i)$ of $X$, various liftings $\wt\varphi_i$ give rise to the cocycle $(\exp(\wt\varphi_i-\wt\varphi_j))_{ij}\in H^1(X,\cO_X^*)$ defining a rank-one vector bundle $\cL_\varphi$, and $\rd+\rd\varphi$ is a well-defined connection on $\cL_\varphi(*D)$.} 
\end{itemize}

\begin{definition}[Very good formal decomposition, \cite{Bibi93}]
Let $\cM$ be a $D$-meromorphic flat bundle and let $\cM^\good$ be a good decomposable $D$-meromorphic flat bundle. If $\cM$ has $\cM^\good$ as a $D$-formal model, we say that~$\cM$ has a \emph{very good formal decomposition} along $D$ at each point of $D$.
\end{definition}

If $\cM$ has $\cM^\good$ $D$-formal model, we can fix a $D$-formal isomorphism $\iso_{\wh D}:\cM_{\wh D}\isom\cM^\good_{\wh D}$. We thus have an $\cM^\good$-marked $D$-meromorphic flat bundle $(\cM,\iso_{\wh D})$, which defines a section on $D$ of the sheaf $\cH_D(\cM^\good)$ (\cf Section \ref{subsec:Nmarked}).

Let $\gamma:(\CC^\ell,0)\to(X,D)$ be a germ of holomorphic map such that $\gamma^{-1}(D)$ is a normal crossing divisor. Then $\gamma^+\cM^\good$ is a good model (but various $\gamma^*\varphi$ may coincide, leading to grouping the corresponding $\gamma^+\cR_\varphi$) and $\gamma^+(\cM,\iso_{\wh D}):=(\gamma^+\cM,\gamma^*\iso_{\wh D})$ is a $\gamma^+\cM^\good$-marked $\gamma^{-1}(D)$-meromorphic flat bundle on $(\CC^\ell,\gamma^{-1}(D))$ near $0\in\CC^\ell$.

By definition, $\cM^\good$ is endowed with the marking $\id$. Any other marking is an automorphism $\iso_{\wh D}:\cM^\good_{\wh D}\isom\cM^\good_{\wh D}$.

\begin{proposition}\label{prop:blockdiag}
Any such automorphism is block diagonal with respect to the decomposition \eqref{eq:gooddecbdle}, with the $(\varphi,\varphi)$-block being induced by an automorphism of $\cR_\varphi$.
\end{proposition}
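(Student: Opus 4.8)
The plan is to reduce the problem to a statement about formal morphisms between the model and itself, and then to exploit the goodness hypothesis to force the block-diagonal structure. An automorphism $\iso_{\wh D}$ of $\cM^\good_{\wh D}$ is, after fixing the decomposition \eqref{eq:gooddecbdle}, a matrix of morphisms $\iso_{\varphi\psi}:(\cE^\psi\otimes\cR_\psi)_{\wh D}\to(\cE^\varphi\otimes\cR_\varphi)_{\wh D}$, and the claim is precisely that $\iso_{\varphi\psi}=0$ whenever $\varphi\neq\psi$, while each diagonal entry $\iso_{\varphi\varphi}$ is induced by an automorphism of $\cR_\varphi$.

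First I would analyze the off-diagonal blocks. A morphism $\iso_{\varphi\psi}$ is equivalent to a horizontal section of $\cHom(\cE^\psi\otimes\cR_\psi,\cE^\varphi\otimes\cR_\varphi)_{\wh D}=\cE^{\varphi-\psi}\otimes\cHom(\cR_\psi,\cR_\varphi)_{\wh D}$, where the connection on the Hom-bundle is twisted by $\rd(\varphi-\psi)$. The key point is that $\cHom(\cR_\psi,\cR_\varphi)$ still has regular singularities along $D$ (being a Hom of regular objects), whereas, when $\varphi\neq\psi$, the factor $\cE^{\varphi-\psi}$ is genuinely irregular: by the goodness hypothesis the difference $\varphi-\psi$ is purely monomial, so in local coordinates it equals $u_{\varphi\psi}\,\bmx^{-\bmm}$ with $\bmm\in\NN^\ell\moins\{0\}$ and $u_{\varphi\psi}$ invertible, as in \eqref{eq:mvarphi}. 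The step I expect to be the main obstacle is showing that such a twisted regular-by-irregular formal bundle admits no nonzero horizontal section. I would argue this by a direct computation on the formal level: a horizontal section must satisfy $\nabla^\reg s+\rd(\varphi-\psi)\cdot s=0$, and feeding an expansion $s=\sum s_{\bmk}\bmx^{\bmk}$ into this equation, the leading term $\rd(\varphi-\psi)=-\rd(\bmx^{-\bmm})\cdot(\text{unit})$ contributes poles of an order that cannot be matched by the regular part $\nabla^\reg s$, whose connection matrix has at most logarithmic poles. Comparing the pole orders along each component $D_i$ on which $\bmm$ has a nonzero exponent forces $s=0$ term by term; this is the familiar fact that $\Hom$ between an irregular and a regular (or a strictly lower-slope) formal meromorphic connection vanishes, and it is exactly here that purely monomial goodness is used to make the slopes along each component comparable and nonzero.

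For the diagonal blocks, when $\varphi=\psi$ the twisting factor disappears and $\iso_{\varphi\varphi}$ is simply a horizontal section of $\cHom(\cR_\varphi,\cR_\varphi)_{\wh D}$, i.e.\ a formal endomorphism of the regular object $\cR_\varphi$ over $\cO_{\wh D}(*D)$. Since $\cR_\varphi$ has regular singularities, I would invoke the fact that its formal flat endomorphisms are already holomorphic (equivalently, formal horizontal sections of a regular meromorphic flat bundle come from the restriction $\cR_{\varphi|D}$, by the formal analogue of the regularity theorem), so each $\iso_{\varphi\varphi}$ is induced by an automorphism of $\cR_\varphi$ itself; invertibility of the full block-diagonal matrix is equivalent to invertibility of each diagonal block. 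Finally I would note that the argument is local on $D$ in the chosen coordinates, and since the vanishing of off-diagonal blocks and the shape of diagonal blocks are pointwise conditions that glue, the global statement over $X$ follows directly from the local one, completing the proof.
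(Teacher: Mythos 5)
Your identification of the blocks and your local vanishing argument at points where the germ of $\varphi-\psi$ is genuinely purely monomial are fine, and they agree with the first half of the paper's proof (the diagonal case, handled by regularity, is also correct). The gap is in your closing claim that the vanishing of the off-diagonal blocks is a pointwise condition holding at every $x_o\in D$ and therefore glues. The hypothesis is only that $\varphi\neq\psi$ as elements of $\Gamma(X,\cO_X(*D)/\cO_X)$; goodness then guarantees that $\varphi-\psi$ has a pole along \emph{some} component of $D$, not along all of them ($\bmm$ may have zero entries). At a point $x_o$ lying only on components along which $\varphi-\psi$ has no pole, the germ of $\varphi-\psi$ in $\cO_{X,x_o}(*D)/\cO_{X,x_o}$ is zero, the twist $\cE^{\varphi-\psi}$ is trivial there, and $\cHom^\nabla\bigl((\cE^\psi\otimes\cR_\psi)_{\wh D,x_o},(\cE^\varphi\otimes\cR_\varphi)_{\wh D,x_o}\bigr)$ is the generally nonzero space of flat sections of $\cHom(\cR_\psi,\cR_\varphi)$; your slope and pole-order comparison gives nothing at such points. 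This configuration is not a pathology to be excluded: it is exactly the situation of Example \ref{exem:decomp}, it is the whole point of the distinction between good and very good decompositions, and it occurs for the model \eqref{eq:cN} that motivates the paper.

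To close the gap you need a global propagation argument, and this is where the standing assumption that $D$ is \emph{connected} enters. Let $D(\varphi,\psi)$ be the (nonempty, since $\varphi\neq\psi$ globally) union of components of $D$ along which $\varphi-\psi$ has a pole, and let $D'$ be the union of the remaining components. Near $D(\varphi,\psi)$ your local computation kills the block $\iso_{\varphi,\psi,\wh D}$. Near $D'$ that block is a section of a locally constant sheaf on a punctured neighbourhood $W'\moins D'$; since $D$ is connected, every connected component of $W'$ accumulates on $D(\varphi,\psi)$, where the section is already known to vanish, so it vanishes identically. Without this step the statement would actually fail (e.g.\ if $D$ were allowed to be disconnected and $\varphi-\psi$ had poles only on one connected component of $D$), so the connectivity argument is an essential missing ingredient, not a routine gluing.
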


\begin{proof}[Sketch of proof]
Let $x_o\in D$. One computes that, if $\varphi_{x_o}\neq\psi_{x_o}$ in $\Phi_{x_o}$, then
\[
\cHom^\nabla((\cE^\varphi\otimes\cR_\varphi)_{\wh D,x_o},(\cE^\psi\otimes\cR_\psi)_{\wh D,x_o})=0,
\]
being formed of $\nabla$-horizontal sections of $\cE^{\psi-\varphi}\otimes\cHom(\cR_{\varphi,\wh D,x_o},\cR_{\psi,\wh D,x_o})$. On the other hand, if $\varphi_{x_o}=\psi_{x_o}$ in $\Phi_{x_o}$, we have by regularity,
\begin{align*}
\cHom^\nabla((\cE^\varphi\otimes\cR_\varphi)_{\wh D,x_o},(\cE^\psi\otimes\cR_\psi)_{\wh D,x_o})&=\cHom^\nabla(\cR_{\varphi,\wh D,x_o},\cR_{\psi,\wh D,x_o})\\
&\simeq\cHom^\nabla(\cR_\varphi,\cR_\psi)_{|D,x_o}.
\end{align*}
Let $\varphi\neq\psi$ in $\Phi$ and let $D(\varphi,\psi)$ be the (nonempty) union of components of $D$ where $\varphi\neq\psi$. Let $D'$ be the union of the other components. Since $D$ is assumed to be connected, each connected component of $D'$ cuts $D(\varphi,\psi)$. The above computation shows that the block $\iso_{\varphi,\psi,\wh D}$ is a section of a locally constant sheaf on $W'\moins D'$, where~$W'$ is a neighbourhood of $D'$ in $X\moins D(\varphi,\psi)$. Since each connected component of this neighbourhood has a limit point in $D(\varphi,\psi)$, the section must vanish near this limit point. Hence it is zero.
\end{proof}

\begin{corollaire}
Let $\iso_{\wh D}$ and $\iso'_{\wh D}$ be two markings of $\cM$ with model $\cM^\good$. Let $x_o\in D$, let $\ell=\codim_XD^{(x_o)}$ and let $\gamma:(\CC^\ell,0)\to(X,D)$ be a transversal slice of~$D^{(x_o)}$ at $x_o=\gamma(0)$. If $\gamma^*\iso_{\wh D}=\gamma^*\iso'_{\wh D}$, then $\iso_{\wh D}=\iso'_{\wh D}$.
\end{corollaire}

\begin{proof}
It is enough to prove the assertion for markings $\id$ and $\iso_{\wh D}$ of $\cM^\good$. We can work on a connected open neighbourhood of $D$ in $X$, which we still denote by $X$, so that we can assume that $X\moins D$ is connected. Since, by Proposition \ref{prop:blockdiag}, $\iso_{\wh D}$ is block diagonal with blocks $\iso_{\wh D,\varphi,\varphi}$, we are reduced to proving that if $\gamma^*\iso_{\wh D,\varphi,\varphi}=\id$, then $\iso_{\wh D,\varphi,\varphi}=\id$. This is a consequence of the fact that $\iso_{\wh D,\varphi,\varphi}$ is a global section of a locally constant sheaf on a connected set, as shown in the same proposition.
\end{proof}

\subsection{Good versus very good formal decomposition}
More general is the notion of \emph{good formal decomposition} (\cf\cite{Bibi97, Mochizuki07b,Kedlaya09,Mochizuki08,Kedlaya10}). We~will make explicit the differences between the two notions. We denote by $\cO_{\wh{D^{(x_o)}}}$ the sheaf locally defined as $\varprojlim_k\cO_X/(x_1,\dots,x_\ell)^k$ (\ie the formalization of $\cO_X$ along the stratum of $x_o$) and we say that $\cM$ has a good decomposition at $x_o$ with formal model $\cM^\good$ if there exists an isomorphism in the neighbourhood of $x_o$:
\begin{equation}\label{eq:gooddec}
\cM_{\wh{D^{(x_o)}}}:=\cO_{\wh{D^{(x_o)}}}\otimes\cM\isom\cO_{\wh{D^{(x_o)}}}\otimes\cM^\good=:\cM^\good_{\wh{D^{(x_o)}}}.
\end{equation}

\begin{remarque}
Let us make clear that, starting from any $D$-meromorphic flat bundle, one can find a sequence of blowing-ups (locally on $X$ in the complex analytic setting, \cf \cite{Kedlaya09,Kedlaya10}, and globally in the projective setting, \cf \cite{Mochizuki07b,Mochizuki08}) so that, after local ramifications on the blown-up space giving rise to a space denoted by $(X',D')$, the pullback of the $D$-meromorphic flat bundle, which is now a $D'$-meromorphic flat bundle, admits a good formal decomposition, \ie for each stratum~$D'_I$ of the natural stratification of $D'$, when tensored with the formal completion $\cO_{\wh{D'_I}}$ along this stratum, it admits an isomorphism with a good model. In general, such an isomorphism cannot be lifted as an isomorphism formally along the divisor~$D'$, \ie by tensoring instead with $\cO_{\wh{D'}}$.
\end{remarque}

\begin{exemple}\label{ex:goodverygood}
However (\cf \cite[Th.\,I.2.2.4]{Bibi97} in dimension two, and in general \cite[Cor.\,11.28]{Bibi10} which is a consequence of results by T.\,Mochizuki \cite[\S2.4.3]{Mochizuki08} on good lattices), if we assume moreover that, given $x_o\in D$, for any pair $\varphi\neq\psi\in\Phi_{x_o}$ the difference $\varphi-\psi$ has poles along \emph{all} components of $D_{x_o}$, then any good formal decomposition along $D^{(x_o)}$ is very good.
\end{exemple}

\begin{exemple}[\Cf{\cite[Lem.\,I.2.2.3]{Bibi97}}]\label{exem:decomp}
On the other hand, let $x_o\in D$ and assume that there exists a component $D_i$ of $D_{x_o}$ along which all nonzero differences $\varphi-\psi$ ($\varphi,\psi\in\Phi_{x_o}$) vanish (\ie their representatives do not have a pole, in other words, $D(\Phi_{x_o})\neq D_{x_o}$. Then the germ $\cH_D(\cM^\good)_{x_o}$ reduces to $(\cM^\good,\id)$, that is, for any germ $(\cM,\iso_{\wh D})_{x_o}$, there exists a (unique) lifting $\iso_{x_o}:\cM_{x_o}\isom\cM^\good_{x_o}$. Indeed (\cf below), the sheaf $\cAut^\rdD(\cM^\good_{\partial\wt X})_{|\varpi^{-1}(x_o)}$ is equal to~$\id$, so $\St_D(\cM^\good)_{x_o}$ is also reduced to $\id$, and we can apply Theorem \ref{th:HSt} below.
\end{exemple}

Checking whether a good decomposition is very good can be done inductively with respect to the level decomposition of $\cM_{\wh D}$, which we define now, in a way parallel to the level decomposition of $\Phi_{x_o}$.

\begin{proposition}[First step of the level decomposition]\label{prop:devissage}
Assume that $\cM$ is a (germ at~$x_o$ of a) good $D$\nobreakdash-meromorphic flat bundle with good formal model \eqref{eq:gooddecbdle} (\ie \eqref{eq:gooddec} holds) and that $D(\Phi_{x_o})= D_{x_o}$. Then, for every $c\in C(x_o,\varphi_o)$ (\cf \eqref{eq:CPhi}) there exists a $D$\nobreakdash-meromorphic flat bundle~$\cM_c$ in the neighbourhood of $D^{(x_o)}$ satisfying the following properties with respect to \eqref{eq:gooddecbdle}:
\begin{align*}\tag{\ref{prop:devissage}$\,*$}\label{eq:devissage*}
\cM_{\wh D|D^{(x_o)}}&\simeq\bigoplus_{c\in C(x_o,\varphi_o)}\cM_{c,\wh D|D^{(x_o)}},\\
\tag{\ref{prop:devissage}$\,**$}\label{eq:devissage**}
\cM_{c,\wh{D^{(x_o)}}}&\simeq\bigoplus_{\eta\in\Phi(x_o,\varphi_o,c)}(\cE^{\eta}\otimes\cR_\eta)_{\wh{D^{(x_o)}}}\quad\forall c\in C(x_o,\varphi_o).
\end{align*}
Moreover, we have $\cM_{\wh D|D^{(x_o)}}\!\simeq\!\cM_{\wh D|D^{(x_o)}}^\good$ if and only if the same property holds for each~$\cM_c$.
\end{proposition}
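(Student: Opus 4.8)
The plan is to isolate the top level $\bmm_o$ of the exponential factors by attaching a single holomorphic endomorphism to the connection, and to read off \eqref{eq:devissage*}--\eqref{eq:devissage**} from its spectral decomposition. Since $\Phi_{x_o}-\varphi_o$ is good and contains $0$, and since twisting by $\cE^{-\varphi_o}$ leaves $C(x_o,\varphi_o)$ and the $\Phi(x_o,\varphi_o,c)$ unchanged (it only shifts $\Phi_{x_o}$ to $\Phi_{x_o}-\varphi_o$), I would first reduce to $\varphi_o=0$; by Lemma \ref{lem:Cgeq2} I may assume $\#C(x_o,\varphi_o)\geq2$, otherwise $\cM_c=\cM$ and there is nothing to do. Along the connected stratum $D^{(x_o)}$ I fix the components $D_1,\dots,D_\ell$ meeting it, coordinates with $D_i=\{x_i=0\}$, and a lattice $\cL=\bigoplus_\varphi\cE^\varphi\otimes\cL_\varphi$ adapted to the good model \eqref{eq:gooddecbdle}, with $\cL_\varphi$ a logarithmic lattice of $\cR_\varphi$. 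The elementary but crucial observation is that the hypothesis $D(\Phi_{x_o})=D_{x_o}$ forces $\bmm_o\in(\NN_{>0})^\ell$: each component of $D_{x_o}$ carries some pole, and $\bmm_o$ dominates every $\bmm_\varphi$ for the partial order.

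The main device is the operator $N:=\bmx^{\bmm_o}\nabla_\theta$ with $\theta=\sum_{i=1}^\ell x_i\partial_{x_i}$. It is only $\CC$-linear, but $[N,f]=\bmx^{\bmm_o}\theta(f)$ lies in the ideal of $D^{(x_o)}$ because $\bmm_o\in(\NN_{>0})^\ell$; hence $N$ induces an honest $\cO_{D^{(x_o)}}$-linear endomorphism $\ov N$ of $\cL_{|D^{(x_o)}}$. A block computation on $\cE^\varphi\otimes\cL_\varphi$---writing $\nabla_\theta=\theta+\theta(\varphi)+(\text{logarithmic part})$, using $\theta(u_\varphi\bmx^{-\bmm_\varphi})=(\theta(u_\varphi)-|\bmm_\varphi|u_\varphi)\bmx^{-\bmm_\varphi}$, and noting that the $\bmx^{\bmm_o}\theta$ and logarithmic contributions, as well as $\theta(u_\varphi)$ and $\bmx^{\bmm_o-\bmm_\varphi}$ for $\bmm_\varphi<\bmm_o$, all vanish along the stratum---shows that $\ov N$ acts on the $\varphi$-block by the scalar $-|\bmm_o|\,c(\varphi,\varphi_o)$. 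By goodness the leading coefficients separate the levels at every point of the connected stratum, so the eigenvalues $\{-|\bmm_o|\,c\mid c\in C(x_o,\varphi_o)\}$ of $\ov N$ are pairwise distinct holomorphic functions on $D^{(x_o)}$. On the other hand, grouping the assumed good decomposition \eqref{eq:gooddec} according to the value of $c(\cdot,\varphi_o)$ defines $\cM_{c,\wh{D^{(x_o)}}}:=\bigoplus_{\eta\in\Phi(x_o,\varphi_o,c)}(\cE^\eta\otimes\cR_\eta)_{\wh{D^{(x_o)}}}$, which is \eqref{eq:devissage**}; by the computation this grouping is exactly the generalized eigenspace decomposition of $\ov N$, hence intrinsic to $(\cM,\nabla)$ and independent of the chosen formal trivialization.

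The remaining and most delicate point is to promote this spectral splitting, given a priori only over the deeper completion $\cO_{\wh{D^{(x_o)}}}$ and on the restriction to the stratum, to honest $D$-meromorphic flat bundles $\cM_c$ near $D^{(x_o)}$ together with the isomorphism \eqref{eq:devissage*} over the shallower completion $\cO_{\wh D}$. Here I would argue as in the one-variable Hukuhara--Turrittin splitting lemma: since $\ov N$ has pairwise distinct eigenvalues, the obstruction to lifting an eigenprojector order by order in the ideal of $D^{(x_o)}$ is the inversion of $\mathrm{ad}(\ov N)$ on off-diagonal blocks, which is invertible; the integrability of $\nabla$ makes these lifts compatible in all variables. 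Because distinct levels share no exponential factor of top order $\bmm_o$, no Stokes phenomenon intervenes between the pieces and the lift is convergent, yielding an honest decomposition $\cM\simeq\bigoplus_c\cM_c$ in a neighbourhood of the stratum; applying $\cO_{\wh D}\otimes(-)$ gives \eqref{eq:devissage*} and $\cO_{\wh{D^{(x_o)}}}\otimes(-)$ recovers \eqref{eq:devissage**}. I expect this descent from $\cO_{\wh{D^{(x_o)}}}$ to $\cO_{\wh D}$ (equivalently, the convergence of the level splitting) to be the principal obstacle, and precisely the place where the goodness hypothesis $D(\Phi_{x_o})=D_{x_o}$ is genuinely used, through the positivity of $\bmm_o$ and the separation of the spectrum of $\ov N$.

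For the final equivalence, both $\cM$ and $\cM^\good$ level-decompose, $\cM^\good$ with $c$-piece $\cM^\good_c:=\bigoplus_{\eta\in\Phi(x_o,\varphi_o,c)}\cE^\eta\otimes\cR_\eta$, which is the formal model of $\cM_c$ by \eqref{eq:devissage**}. Any isomorphism $\cM_{\wh D|D^{(x_o)}}\isom\cM^\good_{\wh D|D^{(x_o)}}$ must be block-diagonal with respect to the level grading: for $\eta\in\Phi(x_o,\varphi_o,c)$ and $\eta'\in\Phi(x_o,\varphi_o,c')$ with $c\neq c'$, the difference $\eta-\eta'$ has a genuine pole of order $\bmm_o$ along every component of $D_{x_o}$, so that $\cHom^\nabla\big((\cE^\eta\otimes\cR_\eta)_{\wh D},(\cE^{\eta'}\otimes\cR_{\eta'})_{\wh D}\big)=0$, exactly as in the proof of Proposition \ref{prop:blockdiag}. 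Consequently such an isomorphism exists if and only if it exists blockwise, \ie if and only if each $\cM_c$ satisfies $\cM_{c,\wh D|D^{(x_o)}}\simeq\cM^\good_{c,\wh D|D^{(x_o)}}$, which is the asserted equivalence.
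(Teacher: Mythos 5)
Your treatment of the final equivalence is essentially the paper's: both reduce it to the block-diagonality of any isomorphism $\cM_{\wh D|D^{(x_o)}}\isom\cM^\good_{\wh D|D^{(x_o)}}$ with respect to the level grading, via the vanishing of horizontal sections of $\cHom(\cR_\eta,\cE^{\eta'-\eta}\otimes\cR_{\eta'})$ when $c(\eta,\varphi_o)\neq c(\eta',\varphi_o)$ (the paper inserts one extra step, faithful flatness of $\cO_{\wh{D^{(x_o)}}}$ over $\cO_{\wh D|D^{(x_o)}}$, to pass from the $\cM_c$-blocks to the individual $(\cE^\eta\otimes\cR_\eta)$-blocks; you elide this, but it is minor). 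For the first part the paper simply cites \cite[p.\,189--190]{Bibi10}, and your attempt to reprove it has the right starting point --- the $\cO$-linearization of $\bmx^{\bmm_o}\nabla_\theta$ and the observation that $D(\Phi_{x_o})=D_{x_o}$ forces $\bmm_o\in(\NN_{>0})^\ell$ are both correct and are indeed the germ of the argument in the reference --- but two steps do not hold up.

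First, your operator $N$ is defined on a lattice $\cL$ of the \emph{model} $\cM^\good$. To extract anything about $\cM$ itself in a neighbourhood of $D^{(x_o)}$ you need a lattice of $\cM$ stable under $\bmx^{\bmm_o}\nabla_\theta$; the existence of such a good lattice, given only the good formal decomposition \eqref{eq:gooddec}, is a nontrivial theorem of T.\,Mochizuki (the paper invokes it in Example \ref{ex:goodverygood} via \cite[Cor.\,11.28]{Bibi10}), and without it your construction lives entirely in $\cM_{\wh{D^{(x_o)}}}$, where the decomposition is already the hypothesis. Second, and more seriously, the claim that ``no Stokes phenomenon intervenes between the pieces and the lift is convergent, yielding an honest decomposition $\cM\simeq\bigoplus_c\cM_c$'' is false: two exponents in \emph{distinct} levels differ by a factor whose polar order is exactly $\bmm_o$, i.e.\ the \emph{dominant} exponential difference, which is precisely where Stokes phenomena concentrate. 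Already in one variable (say $\Phi=\{0,1/z\}$, two levels) the level decomposition coincides with the Hukuhara--Turrittin decomposition and is only formal --- a convergent splitting would kill all Stokes matrices. Accordingly the proposition only asserts \eqref{eq:devissage*} after completion along $D$; the genuine content, which your argument does not reach, is that the level splitting (unlike the full splitting into the $\cE^\eta\otimes\cR_\eta$) descends from $\cO_{\wh{D^{(x_o)}}}$ to $\cO_{\wh D}$ and that the summands are formalizations of honest $D$-meromorphic flat bundles $\cM_c$; this requires either the spectral decomposition of the linearized operator over the infinitesimal neighbourhoods $\cO_X/\cI_D^k$ on a good lattice of $\cM$, or a lifting through the Riemann--Hilbert correspondence for Stokes-filtered local systems, neither of which your order-by-order inversion of $\ad(\ov N)$ ``in the ideal of $D^{(x_o)}$'' (the wrong ideal for this purpose) provides.
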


\begin{proof}
See \cite[p.\,189--190]{Bibi10} for the first part. For the second assertion, the~``if'' part follows from the first part. Conversely, assume $\cM_{\wh D|D^{(x_o)}}\!\simeq\!\cM_{\wh D|D^{(x_o)}}^\good$. It is enough to prove that this isomorphism is block diagonal with respect to the decomposition~\eqref{eq:devissage*}. This amounts to showing that there is no nonzero morphism $\cM_{c,\wh D|D^{(x_o)}}\to\cM^\good_{c',\wh D|D^{(x_o)}}$ if $c\neq c'$ and, by \eqref{eq:devissage**} and faithful flatness of $\cO_{\wh{D^{(x_o)}}}$ over $\cO_{\wh D|D^{(x_o)}}$, no nonzero morphism $(\cE^{\varphi}\otimes\cR_\varphi)_{\wh{D^{(x_o)}}}\to(\cE^{\psi}\otimes\cR_\psi)_{\wh{D^{(x_o)}}}$ if $c(\varphi,\varphi_o)\neq c(\psi,\varphi_o)$. This is implied by the vanishing of any horizontal section of $\cHom(\cR_{\varphi,\wh{D^{(x_o)}}},\cE^{\psi-\varphi}\otimes\cR_{\psi,\wh{D^{(x_o)}}})$ if $\varphi\neq\psi$ in $\Phi$ in the neighbourhood of $D^{(x_o)}$, a property that is standard.
\end{proof}

\subsection{The sheaf of Stokes torsors}\label{subsec:StT}\label{subsec:verygoodformal}

\subsubsection*{Reminder of the theory in dimension one}
The approach followed here intends to generalize in higher dimensions that of B.\,Malgrange \cite{Malgrange83bb} in dimension one, relying on the so-called Malgrange-Sibuya theorem (\cf also \cite{B-V89}), approach that we quickly recall here. We thus assume that $(X,D)=(\CC,0)$. Let $\wt\CC=S^1\times\RR_+$ be the real oriented blow-up of $\CC$ at the origin, that is, the space of polar coordinates. It is endowed with the sheaf $\cA_{\wt\CC}$ ($C^\infty$ functions on $\wt\CC$ satisfying the Cauchy-Riemann equation on~$\CC^*$, hence $\cA_{\wt\CC|\CC^*}=\cO_{\CC^*}$) and its subsheaf $\cA_{\wt\CC}^\rrd$ consisting of functions having rapid decay along $S^1\times\{0\}$. We now only consider the sheaf-theoretic restrictions $\cA_{S^1},\cA^\rrd_{S^1}$ of these sheaves to the boundary $S^1\times\{0\}=S^1$. For a model meromorphic flat bundle $\cM^\good$ on $(\CC,0)$, the sheaf $\cAut_{S^1}^\rrd(\cM^\good)$ consists of local automorphisms of $\cA_{S^1}\otimes\cM^\good$ compatible with the connection that are asymptotic to $\id$ on the open set of $S^1$ where they are defined. The set $H^1(S^1,\cAut_{S^1}^\rrd(\cM^\good))$, that is, the set of \emph{Stokes torsors}, classifies meromorphic flat bundles~$\cM$ endowed with a formal isomorphism $\iso_{\wh0}:\cM_{\wh0}\isom\cM^\good_{\wh0}$, according to \cite[Th.\,3.4]{Malgrange83bb}. It can be endowed with a richer structure (\cf \cite{B-V89}) that we will not consider here.

\begin{exemple}\label{exem:Stokes}
Let $t_{o,1},\dots,t_{o,n}$ be pairwise distinct complex numbers and let $\cR_i$ ($i=\nobreak1,\dots,n$) be a free $\cO_{\CC,0}(*0)$-module with a regular connection. Set
\[
\cM^\good=\bigoplus_{i=1}^n\cE^{-t_{o,i}/z}\otimes\cR_i.
\]
If $\theta_o\in S^1$ is general, we can reindex the numbers $t_{o,i}$ so that, for $i,j\in\{1,\dots,n\}$,
\[
i< j\iff\mathrm{Re}((t_{o,i}-t_{o,j})\mathrm{e}^{-\sfi\theta_o})<0,
\]
and an element of $H^1(S^1,\cAut_{S^1}^\rrd(\cM^\good))$ is a pair $(S^+,S^-)$ of matrices (the Stokes matrices), one being upper triangular, the other one being lower triangular, both having $\id$ as their diagonal part. Let us recall this correspondence. It also depends on a choice of a horizontal basis of $\bigoplus_i\cR_i$ in a small open sector centered at $\theta_o$ and another one in the opposite sector. The set $H^1$ is computed as \v Cech cohomology via the Leray covering consisting of the two intervals with boundary points $\theta_o,\theta_o+\pi$, slightly extended, so that it is identified with
\[
\Gamma\bigl((\theta_o-\epsilon,\theta_o+\epsilon),\cAut_{S^1}^\rrd(\cM^\good)\bigr)\times\Gamma\bigl((\theta_o+\pi-\epsilon,\theta_o+\pi+\epsilon),\cAut_{S^1}^\rrd(\cM^\good)\bigr),
\]
and we identify the first (\resp second) term with upper (\resp lower) triangular constant matrices $S^+$ (\resp $S^-$) with $\id$ on the diagonal since, for a constant matrix~$(S_{ij})$, the matrix $S_{ij}\exp(t_{o,i}-t_{o,j})/z$ has rapid decay in a small open sector centered at $\theta_o$ (\resp $\theta_o+\pi$) if and only if $i<j$ (\resp $i>j$). 

If we make the complex numbers $t_{o,i}$ vary as $t_i$, but remaining pairwise distinct, the space $H^1(S^1,\cAut_{S^1}^\rrd(\cM^\good))$ varies in a locally constant way, as we will recall below. Understanding the behaviour of the Stokes matrices in such a variation needs more care, since they depend on the choice of the generic $\theta_o\in S^1$, which can become non generic for some values of the parameters $t_i$. Apparent real singularities may thus appear in the parameter space. This explains why we will use the language of sheaves of Stokes torsors instead of that of Stokes matrices: we wish to avoid these apparent singularities. However, if we vary $t_i$ along a real parameter in such a way that $\theta_o$ and the corresponding order can be chosen constant all along the deformation, then the representation in terms of Stokes matrices holds all along the deformation. This includes limit cases where some $t_i$'s may coincide: coalescence of eigenvalues occurs in the sense of \cite{C-D-G17a}.
\end{exemple}

\subsubsection*{Real oriented blow-up}
In higher dimensions, the Stokes sectors are multi-sectors, which are conveniently defined on the real oriented blown-up space of~$X$ along the components of $D$, a space that has local coordinates given by the polar coordinates around each component of $D$.

Let us now consider the higher-dimensional situation of $(X,D)$ as above. We denote by $\varpi:\wt X=\wt X(D_{i\in I})\to X$ the real-oriented blowing-up of the components $D_{i\in I}$ in $X$ (\cf \eg \cite[\S8.2]{Bibi10} for the global setting). In a local coordinate system $(x_1,\dots,x_m)$ as above, we identify~$\wt X$ with $(S^1)^\ell\times(\RR_+)^\ell\times\CC^{m-\ell}$ (polar coordinates with respect to $x_1,\dots,x_\ell$). We set $\partial\wt X:=\varpi^{-1}(D)$, locally isomorphic to the product $(S^1)^\ell\times\partial(\RR_+)^\ell\times\CC^{m-\ell}$. On $\wt X$ we consider the sheaves $\cA_{\wt X}$ ($C^\infty$~functions on~$\wt X$ satisfying the Cauchy-Riemann equation on $X^*$) and $\cA^\rdD_{\wt X}$ (holomorphic functions on $X^*$ having rapid decay along~$\partial\wt X$). These sheaves coincide with $\cO_{X^*}$ on~$X^*$, so we will only consider their sheaf-theoretic restrictions to $\partial\wt X$, where we have a strict inclusion
\[
\cA^\rdD_{\partial\wt X}\subset\cA_{\partial\wt X}.
\]
On the other hand, setting
\[
\cA_{\wh{\wt X|D}}:=\varprojlim_k\bigl(\cA_{\partial\wt X}\bigm/\varpi^{-1}\cI_D^k\cdot\cA_{\partial\wt X}\bigr),
\]
we have an exact sequence
\[
0\to\cA^\rdD_{\partial\wt X}\to\cA_{\partial\wt X}\to\cA_{\wh{\wt X|D}}\to0.
\]
(See \cite[\S II.1.1]{Bibi97} and \cite{Mochizuki10}, and the references therein for details). For an $\cO_X(*D)$\nobreakdash-module~$\cM$, we denote by $\cM_{\partial\wt X}$ the $\cA_{\partial\wt X}$-module \hbox{$\cA_{\partial\wt X}\otimes_{\varpi^{-1}\cO_{X|D}}\varpi^{-1}\cM_{|D}$}, and similarly for $\cM^\rdD_{\partial\wt X}$. If, moreover, $\cM$ is endowed with an integrable connection \hbox{$\nabla:\cM\to\Omega^1_X\otimes\cM$}, then $\nabla$ lifts as an operator $\nabla:\cM_{\partial\wt X}\to\varpi^{-1}\Omega^1_X\otimes\cM_{\partial\wt X}$ that satisfies $\nabla^2=0$, and similarly for $\cM^\rdD_{\partial\wt X}$.

\subsubsection*{The sheaf of Stokes torsors}
Let $\cEnd^\nabla(\cM^\good_{\partial\wt X})$ be the sheaf of endomorphisms of $\cM^\good_{\partial\wt X}$ compatible with $\nabla^\good$. We denote by $\cAut^\rdD(\cM^\good_{\partial\wt X})$ its subsheaf consisting of sections whose image in $\cEnd(\cM^\good_{\wh{\wt XD}})$ is equal to~$\id$. It is a sheaf of groups on $\partial\wt X$. We consider the presheaf on $X$ defined by
\[
U\mto H^1\bigl(\varpi^{-1}(U),\cAut^\rdD(\cM^\good_{\partial\wt X})\bigr),
\]
and we denote by $\St_D(\cM^\good)$ the associated sheaf, which we call \emph{the sheaf of Stokes torsors}.\footnote{As pointed out to me by J.-B.\,Teyssier, this presheaf is already a sheaf, due to the theorem of Malgrange-Sibuya mentioned below.} This is a sheaf of pointed sets (pointed by the class of $\id$).\footnote{It could be given a richer structure as in \cite[Th.\,2.2]{Malgrange83dbb}, \cf\cite{Teyssier16}, but we will not need~it.}

We define below a morphism of presheaves
\begin{equation}\label{eq:HStpre}
\cH_D(U,\cM^\good)\to H^1\bigl(\varpi^{-1}(U),\cAut^\rdD(\cM^\good_{\partial\wt X})\bigr)
\end{equation}
and we then consider the associated morphism of sheaves of pointed sets
\begin{equation}\label{eq:HSt}
\cH_D(\cM^\good)\to\St_D(\cM^\good).
\end{equation}

By a theorem of H.\,Majima \cite[Th.\,III.2.1, p.\,121]{Majima84} (\cf also \cite[Th.\,(3.1)]{Bibi93} that is stated in dimension two, but the proof can be adapted to arbitrary dimensions, and \cite[Prop.\,20.1.1]{Mochizuki08}), for any $x_o\in D$ and $\theta_o\in\varpi^{-1}(x_o)$, the germ of $\iso_{\wh D}$ at~$x_o$ can be lifted as a germ of isomorphism
\[
\cM_{\partial\wt X,\theta_o}\isom\cM^\good_{\partial\wt X,\theta_o}.
\]
We can thus find a covering $\cU$ of $\varpi^{-1}(U)$ by open subsets where such a lifting exists, and by comparing the liftings on the intersection of two open subsets we obtain a cocycle in $Z^1\bigl(\cU,\cAut^\rdD(\cM^\good_{\partial\wt X})\bigr)$. Two families of liftings on $\cU$ define two cocycles related by the action of a coboundary, so the corresponding class in $H^1\bigl(\cU,\cAut^\rdD(\cM^\good_{\partial\wt X})\bigr)$ is independent of the choice of local liftings. Passing to the limit with respect to the coverings $\cU$ leads to the definition of the morphism \eqref{eq:HStpre} and hence to that of \eqref{eq:HSt}.

\begin{theoreme}\label{th:HSt}
The morphism of sheaves \eqref{eq:HSt} is an isomorphism.
\end{theoreme}

\begin{proof}
It is completely similar to that of \cite[Th.\,3.4]{Malgrange83bb}, as extended to the case where $D$ is smooth (\cf\cite[\S2]{Malgrange83dbb}, see also \cite[\S II.6.d]{Bibi97}), where one has to replace $S^1$ with $\varpi^{-1}(x_o)\simeq(S^1)^\ell$ for some $\ell$. The proof of injectivity needs no change, and the proof of surjectivity needs the reference to the (generalized) Malgrange-Sibuya theorem \cite[Th.\,12.2]{Bibi10}.
\end{proof}

\begin{corollaire}
Let $U\subset D$ be an open subset and $(\cM,\iso_{\wh D})_{|U}\in\Gamma(U,\cH_D(\cM^\good))$. The $D$-formal isomorphism $\iso_{\wh D}$ can be lifted (in a unique way) as an isomorphism $\iso_{U}:\cM_{|U}\isom\cM^\good_{|U}$ if and only if the image of $(\cM,\iso_{\wh D})_{|U}$ in $\Gamma(U,\St_D(\cM^\good))$ is equal to $\id$.\qed
\end{corollaire}

\subsection{Generic local constancy}
We keep the setting of Section \ref{subsec:verygoodformal}.

\begin{proposition}\label{prop:locconstgeneric}
The sheaf $\cH_D(\cM^\good)$ is a locally constant sheaf of pointed sets when restricted to the smooth open subset of $D$, and its fiber at a smooth point $x_o\in D$ is in bijection with $\cH_0(\gamma^+\cM^\good)$ for any germ $\gamma:(\CC,0)\to (X,x_o)$ transverse to $D$.
\end{proposition}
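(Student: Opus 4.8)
The plan is to transport everything to the sheaf of Stokes torsors via the isomorphism $\cH_D(\cM^\good)\isom\St_D(\cM^\good)$ of Theorem~\ref{th:HSt}; both assertions then become statements about $\St_D(\cM^\good)$ on the smooth locus of $D$. Near a smooth point $x_o$ only one component of $D$ passes through, so after choosing coordinates with $D=\{x_1=0\}$ and $x'=(x_2,\dots,x_m)$ coordinates on the stratum $D^{(x_o)}$, the blow-up $\varpi$ restricts over a small neighbourhood to the projection $p:S^1\times D^{(x_o)}\to D^{(x_o)}$, where $S^1$ is the circle of directions of $x_1$ and $\varpi^{-1}(x_o)\simeq S^1$. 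By definition $\St_D(\cM^\good)$ is the sheaf associated to the presheaf $U\mapsto H^1\bigl(\varpi^{-1}(U),\cAut^\rdD(\cM^\good_{\partial\wt X})\bigr)$.

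The central step is a local product structure for the coefficient sheaf: I claim that, for $U$ a small enough neighbourhood of $x_o$ in $D^{(x_o)}$, the sheaf $\cAut^\rdD(\cM^\good_{\partial\wt X})$ on $S^1\times U$ is isomorphic to the pullback $p^{-1}$ of its restriction to the central fibre $S^1\times\{x_o\}$. This is where goodness is used. For each pair $\varphi\neq\psi$ in $\Phi$ the difference is purely monomial, $\varphi-\psi=u_{\varphi\psi}(x)/x_1^{m_{\varphi\psi}}$ with $u_{\varphi\psi}$ non-vanishing, so that the rapid-decay condition for $\cE^{\varphi-\psi}$ at $(\theta,x')$ is governed, as the radial variable tends to $0$, by the sign of $\mathrm{Re}\bigl(u_{\varphi\psi}(0,x')\mathrm{e}^{-\sfi m_{\varphi\psi}\theta}\bigr)$. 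Since $u_{\varphi\psi}(0,x_o)\neq0$, the associated Stokes hypersurfaces in $S^1\times U$ remain, for $x'$ near $x_o$, close to the vertical hypersurfaces of the central fibre and keep the same combinatorial incidence pattern. I would then build a fibrewise isotopy of $S^1\times U$, rotating each circle by an angle depending continuously on $x'$, that straightens these hypersurfaces onto their central-fibre positions, and transport the rapid-decay condition along it, thereby trivialising $\cAut^\rdD(\cM^\good_{\partial\wt X})$ over $U$; the uniform decay estimates needed here are available from T.\,Mochizuki's analysis on the oriented real blow-up.

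The main obstacle is exactly this trivialisation: one must check that the isotopy respects the sheaf $\cAut^\rdD$ itself, and not merely the combinatorial Stokes stratification it controls, \ie that the rapid-decay estimates are uniform in $x'$ and preserved under the flow. Granting the product structure, the two assertions follow. For $U$ a contractible neighbourhood the central circle $\varpi^{-1}(x_o)$ is a deformation retract of $\varpi^{-1}(U)$, and over $U$ the coefficient sheaf is a pullback from $D^{(x_o)}$, so the stalk $\St_D(\cM^\good)_{x_o}=\varinjlim_U H^1\bigl(\varpi^{-1}(U),\cAut^\rdD\bigr)$ equals $H^1\bigl(\varpi^{-1}(x_o),\cAut^\rdD(\cM^\good_{\partial\wt X})|_{\varpi^{-1}(x_o)}\bigr)$, and homotopy invariance of $H^1$ with these ($U$-constant) coefficients gives local constancy of $\St_D(\cM^\good)$ on the smooth locus. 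For the fibre, take $\gamma:(\CC,0)\to(X,x_o)$ transverse to $D$: transversality makes $x_1\circ\gamma$ vanish to order one, so $\gamma^{-1}(D)=\{0\}$ and the induced map on real blow-ups sends the boundary circle diffeomorphically onto $\varpi^{-1}(x_o)$; moreover $\gamma^*(\varphi-\psi)$ keeps the pole order $m_{\varphi\psi}$ (so the $\gamma^*\varphi$ stay pairwise distinct), whence $\gamma^+\cM^\good$ has the matching polar structure and $\cAut_{S^1}^\rrd(\gamma^+\cM^\good)$ is the pullback of $\cAut^\rdD(\cM^\good_{\partial\wt X})|_{\varpi^{-1}(x_o)}$. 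Therefore the fibre $H^1\bigl(\varpi^{-1}(x_o),\cAut^\rdD|_{\varpi^{-1}(x_o)}\bigr)$ is identified with $H^1\bigl(S^1,\cAut_{S^1}^\rrd(\gamma^+\cM^\good)\bigr)$, which by the one-dimensional case of Theorem~\ref{th:HSt} is $\cH_0(\gamma^+\cM^\good)$, as claimed.
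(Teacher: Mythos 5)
Your overall strategy---transport the statement to the sheaf of Stokes torsors via Theorem~\ref{th:HSt} and establish that $\St_D(\cM^\good)$ is locally constant on $D^\smooth$ and compatible with restriction to a transversal curve---is exactly that of the paper's first proof. The difference is that the paper simply cites this local constancy as a known theorem (\cite[Th.\,2.2]{Malgrange83dbb}, \cite[Th.\,II.6.1\,\&\,Cor.\,II.6.7]{Bibi00b}), whereas you attempt to prove it by trivialising the coefficient sheaf $\cAut^\rdD(\cM^\good_{\partial\wt X})$ over a neighbourhood of $x_o$ in $D^\smooth$. Your identification of the fibre with $\cH_0(\gamma^+\cM^\good)$ through the boundary circle of the transversal curve is fine.

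The gap is the trivialisation itself, and it is not merely an omitted estimate: the claim is false in general. First, a single fibrewise rotation cannot straighten the rapid-decay loci of all pairs simultaneously, since the arcs attached to $\cE^{\varphi-\psi}$ move at the rate $\arg u_{\varphi\psi}(0,x')/m_{\varphi\psi}$, which depends on the pair. More seriously, the ``combinatorial incidence pattern'' you invoke need not be preserved: goodness only forces each unit $u_{\varphi\psi}$ to be non-vanishing, so Stokes directions coming from \emph{different} pairs may coincide at $x_o$ and separate, in a cyclic order depending on $x'$, at nearby points. Take for instance $\Phi=\{0,\,1/x_1,\,(1+\mathrm{e}^{x_2})/x_1\}$ near $x_o=0$ on $D=\{x_1=0\}$: two of the three pairwise differences have Stokes directions that coincide exactly when $\mathrm{Im}\,x_2=0$ and switch their cyclic order across that locus. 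In such a situation the restrictions of $\cAut^\rdD(\cM^\good_{\partial\wt X})$ to nearby fibres of $\varpi$ are not carried to one another by any homeomorphism of the circle---the partition of $S^1$ by which off-diagonal blocks are permitted genuinely changes---so no fibrewise isotopy can exhibit the coefficient sheaf as a pullback. What remains true is only that the $H^1$ along the fibres is locally constant; that is precisely the content of the cited theorem, and the phenomenon is the one described in Example~\ref{exem:Stokes} (apparent real singularities of the Stokes-matrix description when the chosen direction ceases to be generic). To close the gap you must either invoke that theorem, as the paper does, or argue as in the paper's second proof via the classification of good Stokes-filtered local systems (\cite[Th.\,4.13]{Mochizuki10b}), which sidesteps any trivialisation of the coefficient sheaf.
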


\begin{proof}[First proof]
It is known (\cite[Th.\,2.2]{Malgrange83dbb}, see also \cite[Th.\,II.6.1\,\&\,Cor.\,II.6.7]{Bibi00b}) that the sheaf $\St_D(\cM^\good)$ is a \emph{locally constant sheaf} of pointed sets when restricted to the smooth part of $D$ and that its sheaf-theoretic restriction to a germ of a smooth curve transversal to $D$ at a smooth point of $D$ is equal to the sheaf of Stokes torsors of the meromorphic flat bundle that is the restriction to $\cM^\good$ to this curve. We conclude with Theorem \ref{th:HSt}.
\end{proof}

\begin{proof}[Second proof]
We use the notion of Stokes structure and Stokes filtration as in \cite{Mochizuki10b} and \cite{Bibi10}. The Riemann-Hilbert correspondence in this setting induces bijective correspondence between the isomorphism classes (on $U\!\subset\!D^\smooth$) of meromorphic flat bundles formally isomorphic to $\cM^\good_{|U}$ and isomorphism classes of good Stokes-filtered local systems $(\cL,\cL_\bbullet)$ on $\varpi^{-1}(U)\!\subset\!\partial\wt X$ whose associated graded object is isomorphic to the Stokes-filtered local system $(\cL^\good,\cL^\good_\bbullet)$ attached to~$\cM^\good_{|U}$.

On the one hand, by applying the general result of \cite[Th.\,4.13]{Mochizuki10b}, which we will also use in a more general setting below (see also \cite[Appendix]{Bibi16b}), the sheaf on $D^\smooth$ classifying the Stokes-filtered local systems is a locally constant sheaf of sets compatible with the restriction to the curve $\gamma$. The supplementary choice of a $D$-formal isomorphism $\iso_{\wh D}$ is equivalent to the choice of an isomorphism $\iso_{\gr D}:(\gr\cL,\gr\cL_\bbullet)\isom(\cL^\good,\cL^\good_\bbullet)$, which is a thus section of a local system. It~follows that the sets of isomorphism classes of pairs $\bigl((\cL,\cL_\bbullet),\iso_{\gr D}\bigr)_{|U}$ also define, when $U$ varies among open subsets of $D$, a locally constant sheaf of pointed sets on~$D^\smooth$.
\end{proof}

\begin{corollaire}\label{cor:genericuniqueness}
Let $U\subset D$ be a smooth connected open subset and let $(\cM,\iso_{\wh U})$ and $(\cM',\iso'_{\wh U})$ be two elements of $\Gamma(U,\cH_D(\cM^\good))$. Then $(\cM,\iso_{\wh U})\simeq(\cM',\iso'_{\wh U})$ if and only if, for some germ $\gamma:(\CC,0)\to (X,D)$ transverse to $D$ at a point $\gamma(0)\in U$, we have $\gamma^+(\cM,\iso_{\wh U})\simeq\gamma^+(\cM',\iso'_{\wh U})$.
\end{corollaire}

\begin{proof}
According to Proposition \ref{prop:locconstgeneric}, this follows from the property that two sections on the connected set $U$ of a locally constant sheaf coincide if and only if they coincide at one point.
\end{proof}

\begin{corollaire}\label{cor:decomp}
Let $U\subset D$ be a smooth connected open subset and $(\cM,\iso_{\wh U})\in\Gamma(U,\cH_D(\cM^\good))$. The $D$-formal isomorphism $\iso_{\wh U}$ can be lifted (in a unique way) as an isomorphism $\iso_{U}:\cM\isom\cM^\good_{|U}$ if and only if for some germ $\gamma:(\CC,0)\to (X,D)$ transverse to $D$ at a point $\gamma(0)\in U$, $\gamma^*\iso_{\wh U}$ lifts as an isomorphism $\gamma^+\cM\isom\gamma^+\cM^\good$.
\end{corollaire}

\begin{proof}
Apply Corollary \ref{cor:genericuniqueness} with $(\cM',\iso'_{\wh U})=(\cM^\good_{|U},\id)$.
\end{proof}

\begin{corollaire}\label{cor:extensiontop}
Let $V\subset U$ be two connected nested open subsets contained in the smooth part of $D$. If the inclusion induces an isomorphism $\pi_1(V,x_o)\isom\pi_1(U,x_o)$ for some $x_o\in V$, then any $(\cM,\iso_{\wh D})_{V}\in\Gamma(V,\cH_D(\cM^\good))$ extends in a unique way as $(\cM,\iso_{\wh D})_{U}\in\Gamma(U,\cH_D(\cM^\good))$.
\end{corollaire}

\begin{proof}
The data $(\cM,\iso_{\wh D})_{V}$ correspond to a section $\sigma$ on $V$ of the locally constant sheaf of sets $\cH_D(\cM^\good)_{|D^\smooth}$. Under the assumption in the corollary, such a section extends in a unique way as a section on $U$.
\end{proof}

\subsection{Local constancy in arbitrary codimension}

Our aim is to prove the analogue of Proposition \ref{prop:locconstgeneric} along higher-codimensional strata of $D$. Let $x_o\in D$ with stratum $D^{(x_o)}$ of codimension $\ell\geq2$.

\begin{proposition}\label{prop:locconstcodimtwo}
The sheaf $\cH_D(\cM^\good)_{|D^{(x_o)}}$ is a locally constant sheaf of pointed sets, and its fiber at a smooth point $x_o$ is in bijection with $\cH_{\gamma^{-1}(D)}(\gamma^+\cM^\good)$ for any germ $\gamma:(\CC^\ell,0)\to (X,x_o)$ transverse to $D^{(x_o)}$.
\end{proposition}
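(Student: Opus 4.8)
The plan is to prove Proposition~\ref{prop:locconstcodimtwo} by reducing the statement along a codimension-$\ell$ stratum to the already-established smooth (codimension-one) case of Proposition~\ref{prop:locconstgeneric}, via the level decomposition of Proposition~\ref{prop:devissage} and an induction on the level structure of $\Phi_{x_o}$. The key idea is that local constancy along $D^{(x_o)}$ is a statement about how $\cH_D(\cM^\good)$ varies as $x_o$ moves within its stratum, and the obstruction to reducing to lower codimension is precisely the presence of difference exponents $\varphi-\psi$ that fail to have poles along every component of $D_{x_o}$.

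First I would distinguish two cases according to the level structure. If every nonzero difference $\varphi-\psi$ (with $\varphi,\psi\in\Phi_{x_o}$) has a pole along \emph{all} components of $D_{x_o}$, then by Example~\ref{ex:goodverygood} the good formal model is automatically very good, and the theory of Stokes torsors developed in Section~\ref{subsec:StT} applies directly: by Theorem~\ref{th:HSt} we identify $\cH_D(\cM^\good)$ with $\St_D(\cM^\good)$, and the local constancy of the latter along $D^{(x_o)}$ follows from the results of Mochizuki on good Stokes-filtered local systems (as in the second proof of Proposition~\ref{prop:locconstgeneric}, using \cite[Th.\,4.13]{Mochizuki10b} now applied along the higher-codimensional stratum rather than the smooth locus). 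The compatibility with the transversal slice $\gamma:(\CC^\ell,0)\to(X,x_o)$ is then a matter of unwinding the restriction of the Stokes-filtered local system to $\varpi^{-1}(\gamma(0))\simeq(S^1)^\ell$.

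The substantive case, and the main obstacle, is when $D(\Phi_{x_o})\neq D_{x_o}$, i.e.\ some component of $D_{x_o}$ carries no pole of any difference. Here the good decomposition need not be very good, so one cannot invoke Theorem~\ref{th:HSt} directly. The strategy is to peel off levels: by Proposition~\ref{prop:devissage}, after passing to the formal completion along $D^{(x_o)}$ one has a decomposition $\cM_{\wh D|D^{(x_o)}}\simeq\bigoplus_{c\in C(x_o,\varphi_o)}\cM_{c,\wh D|D^{(x_o)}}$ indexed by the first-step level invariant $c(\varphi,\varphi_o)$, and each block $\cM_c$ is again good with strictly smaller level complexity. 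I would set up an induction on the (finite) depth of the level decomposition of $\Phi_{x_o}$: the vanishing of $\cHom$ between distinct level-blocks (the standard horizontality argument already used in the proof of Proposition~\ref{prop:devissage} and of Proposition~\ref{prop:blockdiag}) shows that markings, Stokes torsors, and their local-constancy statements all respect the block structure, so the proposition for $\cM^\good$ follows from the proposition for each~$\cM_c$. When $D(\Phi_{x_o})\neq D_{x_o}$, the extremal block already lives effectively along fewer components, which is exactly the mechanism behind Example~\ref{exem:decomp}, where the Stokes sheaf collapses to $\id$ and the marking lifts uniquely.

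The hard part will be the bookkeeping of the two filtrations simultaneously: the \emph{level} filtration on the exponents $\Phi_{x_o}$ (which governs the block-diagonal structure and the induction) and the \emph{stratum} direction along $D^{(x_o)}$ (along which one wants local constancy). I expect the cleanest route is to combine Corollary~\ref{cor:Hartogs} and Proposition~\ref{prop:Malgrangeextension}, which let one remove codimension-one and codimension-two subsets of the stratum, with the generic local constancy along the smooth part (Proposition~\ref{prop:locconstgeneric}), so that local constancy along $D^{(x_o)}$ is checked by testing on transversal curves inside a slice and then propagating across the stratum using simple connectedness of small balls. The identification of the fiber with $\cH_{\gamma^{-1}(D)}(\gamma^+\cM^\good)$ then follows by compatibility of each of these reduction steps with the pullback $\gamma^+$, which is functorial throughout; the one point requiring care is that various $\gamma^*\varphi$ may coincide after restriction, so one must group the corresponding $\gamma^+\cR_\varphi$ exactly as flagged in the definition of $\gamma^+\cM^\good$ in Section~\ref{subsec:verygoodformal}.
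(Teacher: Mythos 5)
Your overall toolkit is the right one (Mochizuki's equivalence along the stratum, the level decomposition of Proposition~\ref{prop:devissage}, an induction, Example~\ref{exem:decomp} for the degenerate situation), but the case analysis around which you organize the proof is miscalibrated, and this is a genuine gap. Your two cases --- (A) every nonzero difference $\varphi-\psi$ has a pole along \emph{all} components of $D_{x_o}$, and (B) some component of $D_{x_o}$ carries no pole of \emph{any} difference, i.e.\ $D(\Phi_{x_o})\neq D_{x_o}$ --- do not exhaust the possibilities. The genuinely substantive case is the one in between: $D(\Phi_{x_o})=D_{x_o}$ (each component carries a pole of \emph{some} difference) while certain individual differences fail to be polar along certain components. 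That is precisely the hypothesis under which Proposition~\ref{prop:devissage} is stated and the level decomposition is available; you instead invoke Proposition~\ref{prop:devissage} in case (B), where its hypothesis $D(\Phi_{x_o})=D_{x_o}$ fails. Moreover you have the difficulty of the two cases reversed: when $D(\Phi_{x_o})\neq D_{x_o}$, Example~\ref{exem:decomp} already shows that the germ $\cH_D(\cM^\good)_{x_o}$ reduces to the single point $(\cM^\good,\id)$, so that is the \emph{easy} case, not ``the main obstacle.''

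The paper's proof also contains a structural step that your sketch lacks and that the talk of Hartogs/Malgrange extension does not replace: one first introduces the auxiliary sheaf $\cH_{D^{(x_o)}}(\cM^\good_{|D^{(x_o)}})$, classifying pairs with a formal isomorphism along the \emph{stratum} $D^{(x_o)}$ rather than along $D$; Mochizuki's restriction equivalence for Stokes-filtered local systems shows directly that this sheaf is locally constant with fiber computed on the slice. The proposition then reduces to showing that the property of admitting a \emph{$D$-formal} model (and the uniqueness of the marking, which is automatic) can be tested on the transversal slice $\gamma$. This last statement is what the induction proves: if $D(\Phi_{x_o})\neq D_{x_o}$ one is done by Example~\ref{exem:decomp}; if $D(\Phi_{x_o})=D_{x_o}$ one applies Proposition~\ref{prop:devissage} and Lemma~\ref{lem:Cgeq2} to split $\cM$ into at least two blocks $\cM_c$ of strictly smaller rank, and the induction is on the rank (your proposed induction on the depth of the level decomposition could be made to work, but only once the case analysis is corrected so that the decomposition is applied where it is actually valid).
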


\begin{proof}
We will use the same strategy as in the second proof of Proposition \ref{prop:locconstgeneric}. However, we will use an induction with respect to the rank of $\cM^\good$ in order to take care of the level structure.

Let us consider the sheaf $\cH_{D^{(x_o)}}(\cM^\good_{|D^{(x_o)}})$ whose sections on an open set $U\subset D^{(x_o)}$ consist of pairs of a germ of a $D$-meromorphic flat bundle $\cM$ on $U$ and an isomorphism $\iso_{\wh{U}}:\cM_{\wh{U}}\isom\cM^\good_{\wh{U}}$ (the formalization is along $D^{(x_o)}$, not along $D$; that it forms a sheaf and not only a presheaf follows from the uniqueness of isomorphisms compatible with $\iso$). By the same argument and references as in the second proof of Proposition \ref{prop:locconstgeneric}, we find that this is a locally constant sheaf of pointed sets, whose fiber at $x_o$ is identified by $\gamma^+$ with $\cH_0(\gamma^+\cM^\good_{|0})$. We are thus led to proving the following:
\begin{enumerate}
\item
If $U\subset D^{(x_o)}$ is simply connected and $x\in U$, then $\cM_{\wh{U}}$ has $\cM^\good_{|U}$ as a \emph{$D$\nobreakdash-formal} model if and only if a similar assertion holds for $\gamma^+\cM_{\wh{U}}$, where $\gamma$ is a transversal slice of $U$ at $x$.
\item
There is a one-to-one correspondence between the set of $\iso_{\wh D|U}$'s lifting $\iso_{\wh{U}}$ and the corresponding pullbacks by $\gamma^*$.
\end{enumerate}

The second point is clear since the sets consist of one element. For the first point, we then do not care about controlling the formal isomorphisms and we argue by induction on the rank of $\cM$. If $D(\Phi_{x_o})\neq D_{x_o}$, then the result follows from Example \ref{exem:decomp}. Assume now that $D(\Phi_{x_o})= D_{x_o}$. Then $\cM_{\wh{U}}$ has $\cM^\good_{|U}$ as a \emph{$D$-formal} model if and only if, for every $c\in C(x_o,\varphi_o)$, $\cM_{c|U}$ has $\cM^\good_{c|U}$ as a \emph{$D$-formal} model (\cf Proposition \ref{prop:devissage}). Since we can assume $\# C(x_o,\varphi_o)\geq2$ (\cf Lemma \ref{lem:Cgeq2}), the rank of each $\cM_c$ is strictly smaller than that of~$\cM$ and by induction the first point holds for every $\cM_{c|U}$. On the other hand, we also have that $\gamma^+\cM_{\wh{U}}$ has $\gamma^+\cM^\good_{|U}$ as a \emph{$D$-formal} model if and only if, for every $c\in C(x_o,\varphi_o)$, $\gamma^+\cM_{c|U}$ has $\gamma^+\cM^\good_{c|U}$ as a \emph{$\gamma^{-1}(D)$-formal} model. This gives the first point for $\cM_{|U}$.
\end{proof}

\subsection{Uniqueness results}\label{subsec:uniqueness}
We fix $\cM^\good$ as in \eqref{eq:gooddecbdle}, and we use the notation and definitions of Section \ref{subsec:settingnotation}.

\subsubsection*{Local uniqueness results}
We work in a neighbourhood of $x_o\in D$, so that $X$ is the product $B^m$ with coordinates $x_1,\dots,x_m$, where $B$ is a small disc in $\CC$, and $D$ is the divisor defined by $x_1\cdots x_\ell=0$ with components $D_i=\{x_i=0\}$. We regard $\Phi_{x_o}$ as a finite subset of $\CC\{x_1,\dots,x_n\}[(x_1\cdots x_\ell)^{-1}]/\CC\{x_1,\dots,x_n\}$, and we assume that it is good. Let $\varpi:\wt X\to X$ the real oriented blowing up of $D_1,\dots,D_\ell$. Let us also set $D_1^\circ:=\nobreak D_1\cap \nobreak D^\smooth$. In this setting we have $D^{(x_o)}=\{0\}^\ell\times B^{m-\ell}$ and
\begin{align*}
\wt X&\simeq(S^1)^\ell\times[0,\epsilon)^\ell\times B^{m-\ell},&\varpi^{-1}(D^{(x_o)})&\simeq(S^1)^\ell\times\{0\}^\ell\times B^{m-\ell},\\
\partial\wt X&\simeq(S^1)^\ell\times\partial[0,\epsilon)^\ell\times B^{m-\ell},&
\varpi^{-1}(D_1^\circ)&\simeq(S^1)^\ell\times\{0\}\times(0,\epsilon)^{\ell-1}\times B^{m-\ell}.
\end{align*}

The following result is due to J.-B.\,Teyssier who proved a much stronger statement, in the sense that it takes into account more structure on the sheaf of Stokes torsors and, moreover, it compares two $D$-meromorphic flat bundles that have the same good $\cO_{\wh{D^{(x_o)}}}$-model $\cM^\good$ and that are $\cO_{\wh D}$-isomorphic. The present statement will be enough for our purpose, and we will give an independent proof for the sake of completeness.

\begin{proposition}[J.-B.\,Teyssier, {\cite[Th.\,3 \& (2.3.1)]{Teyssier17}}]
\label{prop:uniquenesslocal}
Two germs $(\cM,\iso_{\wh D}),(\cM'\!,\iso'_{\wh D})$ at $x_o$ are isomorphic if and only if their restrictions to a curve transversal to $D^\smooth$ at one point~$x$ near $x_o$ are so.
\end{proposition}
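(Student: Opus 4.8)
The plan is to recast the statement as the injectivity of a generization map for the sheaf $\cH_D(\cM^\good)\simeq\St_D(\cM^\good)$ (Theorem~\ref{th:HSt}), and to prove that injectivity by induction on the rank of $\cM^\good$ by means of the level decomposition.

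The ``only if'' part is clear, an isomorphism of the two germs restricting to the transversal curve. For the converse, let $D_1$ be the component of $D$ carrying the smooth point $x$ and set $D_1^\circ=D_1\cap D^\smooth$, which is connected near $x_o$. By Corollary~\ref{cor:genericuniqueness} applied on $D_1^\circ$, the hypothesis that the restrictions to the transversal curve at $x$ are isomorphic is equivalent to $(\cM,\iso_{\wh D})$ and $(\cM',\iso'_{\wh D})$ having isomorphic restrictions over $D_1^\circ$. It therefore suffices to prove that the restriction map, from the germ of $\cH_D(\cM^\good)$ at $x_o$ to its induced section over $D_1^\circ$, is injective. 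After a transversal slice (Proposition~\ref{prop:locconstcodimtwo}), I may assume $X=\CC^\ell$, $D=\{x_1\cdots x_\ell=0\}$, $x_o=0$ and $D_1=\{x_1=0\}$.

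I argue by induction on the rank of $\cM^\good$. If $D(\Phi_{x_o})\neq D_{x_o}$, Example~\ref{exem:decomp} shows that the germ $\cH_D(\cM^\good)_{x_o}$ is a single point, so injectivity is automatic; this also disposes of the rank-one case. If $D(\Phi_{x_o})=D_{x_o}$, then $\bmm_o$ has positive $D_1$-component. Choosing $\varphi_o$ with $\#C(\varphi_o)\geq2$ (Lemma~\ref{lem:Cgeq2}) and applying the first step of the level decomposition (Proposition~\ref{prop:devissage}), I obtain pieces $\cM^\good_c$ ($c\in C(\varphi_o)$) of strictly smaller rank. On the Stokes sheaf of the \emph{model} this yields an exact sequence of sheaves of groups on $\varpi^{-1}(x_o)$,
\[
1\longrightarrow N\longrightarrow\cAut^\rdD(\cM^\good_{\partial\wt X})\longrightarrow\prod_c\cAut^\rdD(\cM^\good_{c,\partial\wt X})\longrightarrow1,
\]
whose quotient records the within-block data and whose kernel $N$ records the cross-block data. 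Every cross-block difference $\varphi-\psi$ has multidegree $\bmm_o$, hence a genuine pole along $D_1$, so the associated Stokes walls are transverse to the circle $\varpi^{-1}(x)$.

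Passing to the associated sequence of pointed sets and carrying out the usual d\'evissage, injectivity of generization to $D_1^\circ$ on $H^1$ of the quotient reduces, factor by factor, to the induction hypothesis for the smaller models $\cM^\good_c$; on $H^1(N)$ it becomes the one-variable phenomenon that a Stokes cocycle whose walls are transverse to $\varpi^{-1}(x)$ is recovered from its restriction to that single circle. The main obstacle is precisely this: establishing generization injectivity on the top cross-block level $H^1(N)$ --- which I would obtain from the Stokes-filtered local system machinery of the second proof of Proposition~\ref{prop:locconstgeneric} (after Mochizuki) --- and then recombining it with the graded injectivity through the (twisted) non-abelian exact sequence. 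Goodness is what makes this work: it forces the multidegrees $\bmm_\varphi$ to be totally ordered, so the cross-block walls are genuinely transverse to $\varpi^{-1}(x)$, and, via Example~\ref{exem:decomp} applied inside the successive smaller pieces, it ensures that any Stokes data carried only by components other than $D_1$ collapses at $x_o$; hence nothing is lost by testing on a single curve transverse to $D_1$.
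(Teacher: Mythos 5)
Your skeleton coincides with the paper's: the reduction via Corollary~\ref{cor:genericuniqueness} from the transversal curve to the whole of $D_1^\circ$, the induction on the rank through the level decomposition (Lemma~\ref{lem:Cgeq2} and Proposition~\ref{prop:devissage}), the use of Example~\ref{exem:decomp} to dispose of the case $D(\Phi_{x_o})\neq D_{x_o}$, and the observation that goodness forces every cross-block difference to have multidegree $\bmm_o$, hence poles along \emph{all} components of $D_{x_o}$, which is what makes the top level of the structure ``transverse'' and recoverable from $D_1^\circ$. All of this is exactly how the paper proceeds.

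The gap is in the step you yourself flag as ``the main obstacle'': the recombination through the twisted non-abelian exact sequence is not carried out, and it is not a formality. To deduce injectivity of the generization map on $H^1(G)$ from injectivity on $H^1(Q)$ and on $H^1$ of the kernel, you must work fibrewise over a class $[q]\in H^1(Q)$: the fibre of $H^1(G)\to H^1(Q)$ over $[q]$ is a quotient of $H^1$ of the \emph{$q$-twisted} kernel by an action of $H^0$ of the $q$-twisted quotient, so besides injectivity on $H^1({}^qN)$ under restriction to $\varpi^{-1}(D_1^\circ)$ you also need surjectivity of $H^0(Z_{x_o},{}^qQ)\to H^0(Z_{D_1^\circ},{}^qQ)$ (otherwise two classes distinct at $x_o$ could be identified by the larger orbit downstairs); neither the twisting nor this $H^0$ statement appears in your argument. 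The paper avoids the cocycle calculus entirely by passing to Stokes-filtered local systems: the coarse filtration $(\cL,\cL_\preceq)$ indexed by $C(\varphi_o)$ is determined by its restriction to $\varpi^{-1}(D_1^\circ)$ by the ``simple case'' argument (all its jumps have full polar divisor), the graded pieces $(\gr_c\cL,(\gr_c\cL)_\bbullet)$ are determined by the inductive hypothesis, and the full filtration is then recovered for free because $\cL_{\leq\psi}$ is, for $\psi\in\Phi_{x_o}(c)$, the pullback of $(\gr_c\cL)_{\leq\psi}$ under the projection $\cL_{\preceq c}\to\gr_c\cL$ --- this last structural fact is precisely the recombination you would otherwise have to extract from the twisted exact sequence. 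If you want to keep the torsor-theoretic formulation, you must either supply the $H^0$-surjectivity and the twisted computation, or, more economically, translate back to filtered local systems at this point as the paper does. Also note that ``recovered from its restriction to that single circle $\varpi^{-1}(x)$'' conflates two reductions: the passage from $\varpi^{-1}(x_o)\simeq(S^1)^\ell$ to $\varpi^{-1}(D_1^\circ)$ is the content of the present proposition, while the passage from $\varpi^{-1}(D_1^\circ)$ to the single fibre $\varpi^{-1}(x)$ is the already-established generic local constancy (Proposition~\ref{prop:locconstgeneric}); only the first is at stake here.
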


As in the second proof of Proposition \ref{prop:locconstgeneric}, we will use the notion of Stokes-filtered local system. Recall that, in this setting, the restriction functor starting from non-ramified Stokes-filtered local systems $(\cL,\cL_\bbullet)$ on $\partial\wt X$ indexed by $\Phi_{x_o}$ (\cf Conven\-tion~\ref{conv:Phi}) to Stokes-filtered local systems on $\varpi^{-1}(D^{(x_o)})$ indexed by $\Phi_{x_o}$ is an equivalence (\cf \cite[Lem.\,3.17]{Mochizuki10b}, see also \cite[\S2.e]{Bibi16b}). A quasi-inverse functor is obtained by \begin{enumeratea}
\item\label{enum:uniquenesslocala}
taking the pullback local system $\cL$ by the natural projection forgetting the component $\partial[0,\epsilon)^\ell$,
\item\label{enum:uniquenesslocalb}
taking, for every $\cL_{\leq\varphi}\subset\cL$ ($\varphi\in\Phi_{x_o}$), its pullback in the pullback of $\cL$,
\item\label{enum:uniquenesslocalc}
and at each point of $\varpi^{-1}(D)$, summing in $\cL$ the various $\cL_{\leq\varphi}$'s for the $\varphi$'s that coincide near this point.
\end{enumeratea}

\begin{proof}[Proof of Proposition \ref{prop:uniquenesslocal}]
We fix $x\in D^\smooth$ and denote by $D_1^\circ$ the stratum of $x$ and by $\gamma:(\CC,0)\to(X,x)$ a curve transversal to $D_1^\circ$ at $x$.

We first note that it is enough to prove the following statement.

\begin{assertion}\label{ass:uniqueness}
A germ $\cM$ at $x_o$ that satisfies $\cM_{\wh D,x_o}\simeq\cM^\good_{\wh D,x_o}$ is uniquely determined by its restriction $\cM_{\wh D|D_1^\circ}$.
\end{assertion}

Indeed, if Assertion \ref{ass:uniqueness} is proved, assume $\gamma^+(\cM,\iso_{\wh D})\simeq\gamma^+(\cM',\iso'_{\wh D})$. By Corollary \ref{cor:genericuniqueness}, we obtain that $(\cM,\iso_{\wh D})_{|D_1^\circ}\simeq(\cM',\iso'_{\wh D})_{|D_1^\circ}$. According to the assertion, we then have $\cM\simeq\cM'$. Let us check that in such a case, $\iso_{\wh D}$ is also uniquely determined by its restriction to $D_1^\circ$. Since it can be represented by a matrix with entries in~$\cO_{\wh D}$, it is enough to prove the assertion for sections $f$ of $\cO_{\wh D}$ on a neighbourhood of~$x_o$, which follows then from the description of $\cO_{\wh D}$ recalled in the proof of Proposition \ref{prop:Malgrangeextension}. Indeed, setting $f=(f_i)_{i=1,\dots,\ell}$, the condition $f_{1|D_1^\circ}=0$ implies $f_1=0$, hence $\wh f_i=0$ in $\cO_{\wh x_o}$ for each~$i$, and therefore $f_i=0$ in $\cO_{\wh D_i,x_o}$.

We will thus prove Assertion \ref{ass:uniqueness}. Note that, if $D(\Phi_{x_o})\!\neq\!D_{x_o}$, Example \ref{exem:decomp} implies that $\cM\simeq\cM^\good$ and there is nothing to prove. However, note that the existence of some $\iso_{\wh D}$ is essential in this case. We will then assume that \hbox{$D(\Phi_{x_o})\!=\!D_{x_o}$}. We start with a particular case.

\begin{proof}[Proof of the assertion in a simple case]
We assume here that for all $\varphi\neq\psi\in\Phi_{x_o}$, $\varphi-\psi$ has poles along \emph{each} component of $D_{x_o}$. Let $(\cL,\cL_\bbullet)_{\varpi^{-1}(D_1^\circ)}$ be~the Stokes-filtered local system on $\varpi^{-1}(D_1^\circ)$ corresponding to $\cM_{|D_1^\circ}$. By our assumption on $\Phi_{x_o}$, it is the pullback by the natural projection
\[
\varpi^{-1}(D_1^\circ)=(S^1)^\ell\times\{0\}\times(0,\epsilon)^{\ell-1}\times B^{m-\ell}\to(S^1)^\ell\times\{0\}^\ell\times B^{m-\ell}=\varpi^{-1}(D^{(x_o)})
\]
of a Stokes-filtered local system $(\cL,\cL_\bbullet)_{\varpi^{-1}(D^{(x_o)})}$ on $\varpi^{-1}(D^{(x_o)})$, because in each summation procedure recalled in \eqref{enum:uniquenesslocalc} above, there is only one term. As a consequence, $(\cL,\cL_\bbullet)_{\varpi^{-1}(D^{(x_o)})}$ is uniquely determined from $(\cL,\cL_\bbullet)_{\varpi^{-1}(D_1^\circ)}$. We conclude that~$\cM$ is uniquely determined by its restriction to~$D_1^\circ$.
\end{proof}

Before giving the proof in general, let us recall the level structure of Stokes-filtered local systems indexed by $\Phi_{x_o}$ on $\partial\wt X$ (\cf \cite[\S\S2.6\,\&\,3.3]{Mochizuki08}, \cite[pp.\,41\,\&\,139]{Bibi10}). It is parallel to that for $D$-meromorphic flat bundles considered in Proposition \ref{prop:devissage}. In order to simplify the notation, we will assume that $0\in\Phi_{x_o}$, which we take as base point $\varphi_o$, and we use notation as in \eqref{eq:CPhi}. Let us start with a Stokes-filtered local system $(\cL,\cL_\bbullet)$ indexed by $\Phi_{x_o}$ on $\varpi^{-1}(D^{(x_o)})$. It induces a filtration $\cL_\preceq$ indexed by $C=C(\varphi_o)\subset\CC$, where the order $c\preceq c'$ on the latter set at a point of $\varpi^{-1}(D^{(x_o)})$ is defined as the property that $\exp((c-c')x^{-\bmm_o})$ has moderate growth near this point. By definition, we have
\[
\cL_{\preceq c}=\sum_{\substack{\psi\in\Phi_{x_o}\\ c(\psi)\leq c}}\cL_{\leq\psi}.
\]
Moreover, for every $c\in C$ (recall that we can assume $\#C\geq2$, \cf Lemma \ref{lem:Cgeq2}), the graded sheaf $\gr_c\cL$ is a locally constant sheaf, on which the filtration $\cL_\bbullet$ induces a Stokes-filtration whose jumps are contained in~$\Phi_{x_o}(c)$. Lastly, for $\psi\in\Phi_{x_o}(c)$, $\cL_{\leq\psi}$ is the pullback of $(\gr_c\cL)_{\leq\psi}$ by the \hbox{projection} $\cL_{\preceq c}\to\nobreak\gr_c\cL$. By the Riemann-Hilbert correspondence, $(\gr_c\cL,(\gr_c\cL)_\bbullet)$ corresponds to $\cM_c$ considered in Proposition \ref{prop:devissage}.

A similar structure is obtained for a Stokes-filtered local system on $\varpi^{-1}(D_1^\circ)$, by taking pullback and summation of $(\cL,\cL_\bbullet)_{\varpi^{-1}(D_1^\circ)}$ as above.

\subsubsection*{Proof of Assertion \ref{ass:uniqueness} in the general case}
We argue by induction on the rank $r$ of~$\cM$. It remains to consider the case where $D(\Phi_{x_o})=D_{x_o}$. Given $\cM,\cM'$ with $\cM_{\wh D}\simeq\cM'_{\wh D}\simeq\cM^\good_{\wh D}$, assume that $\cM_{|D_1^\circ}\simeq\cM'_{|D_1^\circ}$. It follows that the Stokes-filtered local systems $(\cL,\cL_\bbullet)_{\varpi^{-1}(D_1^\circ)},(\cL',\cL'_\bbullet)_{\varpi^{-1}(D_1^\circ)}$ are isomorphic, hence we have an isomorphism on $\varpi^{-1}(D_1^\circ)$:
\[
\bigl(\cL,\cL_\preceq,(\gr_c\cL,(\gr_c\cL)_\bbullet)_{c\in C}\bigr)_{\varpi^{-1}(D_1^\circ)}\overset{\textstyle(*)}\simeq\bigl(\cL',\cL'_\preceq,(\gr_c\cL',(\gr_c\cL')_\bbullet)_{c\in C}\bigr)_{\varpi^{-1}(D_1^\circ)}.
\]
Arguing as in the special case for $\cL_\preceq$ instead of $\cL_\leq$, we obtain an isomorphism $(\cL,\cL_\preceq)_{\varpi^{-1}(D^{(x_o)})}\simeq(\cL',\cL'_\preceq)_{\varpi^{-1}(D^{(x_o)})}$ and, for each $c\in C$, the induced isomorphism $\gr_c\cL_{\varpi^{-1}(D^{(x_o)})}\simeq\gr_c\cL'_{\varpi^{-1}(D^{(x_o)})}$ corresponds to that induced by~$(*)$ (by pullback by the projection). Since $\cM_{c,\wh D}\simeq\cM'_{c,\wh D}\simeq\cM^\good_{c,\wh D}$ have rank $<r$, we conclude by induction that the component
\[
(*)_c:(\gr_c\cL,(\gr_c\cL)_\bbullet)_{\varpi^{-1}(D_1^\circ)}\isom(\gr_c\cL',(\gr_c\cL')_\bbullet)_{\varpi^{-1}(D_1^\circ)}
\]
of $(*)$ comes from an isomorphism $\cM_c\simeq\cM'_c$, that is, an isomorphism on $\varpi^{-1}(D^{(x_o)})$:
\[
(\gr_c\cL,(\gr_c\cL)_\bbullet)_{\varpi^{-1}(D^{(x_o)})}\simeq(\gr_c\cL',(\gr_c\cL')_\bbullet)_{\varpi^{-1}(D^{(x_o)})}.
\]
As a consequence, the isomorphism $\cL_{\preceq c}\isom\cL'_{\preceq c}$ sends $\cL_{\leq\psi}$ isomorphically to $\cL'_{\leq\psi}$ on $\varpi^{-1}(D^{(x_o)})$ for any $\psi\in\Phi_{x_o}(c)$: indeed, we have a commutative diagram
\[
\xymatrix@R=2mm{
\cL_{\preceq c}\ar[r]^-\sim\ar[dd]&\cL'_{\preceq c}\ar[dd]\\&\\
\gr_c\cL\ar[r]^-\sim&\gr_c\cL'\\
(\gr_c\cL)_{\leq\psi}\ar[r]^-\sim\ar@{}[u]|\cup&(\gr_c\cL')_{\leq\psi}\ar@{}[u]|\cup
}
\]
and $\cL_{\leq\psi}$ (\resp $\cL'_{\leq\psi}$) is the pullback of $(\gr_c\cL)_{\leq\psi}$ (\resp $(\gr_c\cL')_{\leq\psi}$) by the projection $\cL_{\preceq c}\to\gr_c\cL$ (\resp $\cL'_{\preceq c}\to\gr_c\cL'$).

We conclude that $(\cL,\cL_\bbullet)\simeq(\cL',\cL'_\bbullet)$ on $\varpi^{-1}(D^{(x_o)})$, as was to be proved.
\end{proof}

\subsubsection*{Global results}
We now go back to the global setting of Section \ref{subsec:settingnotation}. In particular, $D$~is connected and $\cM^\good$ is fixed. The following result is due to J.-B.\,Teyssier, who has obtained a stronger form.

\begin{corollaire}[J.-B.\,Teyssier {\cite[Proof of Th.\,4]{Teyssier17}}]\label{cor:uniquenessglobal}
Given $\cM^\good$-marked $D$\nobreakdash-mero\-mor\-phic flat bundles $(\cM,\iso_{\wh D})$ and $(\cM',\iso'_{\wh D})$ on $X$, we have $(\cM,\iso_{\wh D})\simeq(\cM',\iso'_{\wh D})$ if and only if there exists a point $x$ in $D^\smooth$ and a germ of curve $\gamma:(\CC,0)\to(X,x)$ transverse to $D$ at $x$ such that $(\gamma^+\cM,\iso_{\wh0})\simeq(\gamma^+\cM',\iso'_{\wh0})$.
\end{corollaire}

\begin{proof}
For any two germs $(\cM,\iso_{\wh D}),(\cM',\iso'_{\wh D})$, let $D'\subset D$ be the subset of points where they are isomorphic. Assume that it is not empty. It is open by definition. Let us show that it is closed. Let $x_o\in\ov{D'}$.
\begin{itemize}
\item
Either $x_o\in D^\smooth$, in which case $D'$ contains a point $x'$ in the component of $D^\smooth$ containing~$x_o$, hence also contains this whole component according to Corollary \ref{cor:genericuniqueness}, and therefore $D'\ni x_o$,
\item
or $x_o\notin D^\smooth$, in which case we apply Proposition \ref{prop:uniquenesslocal} to also conclude that $x_o\in D'$.
\end{itemize}
Therefore, $D'$ is closed, hence equal to $D$. The ``only if'' part of the proposition is clear, and for the ``if'' part we apply the previous argument, since we know by Corollary \ref{cor:genericuniqueness} that $D'\neq\emptyset$.
\end{proof}

We will make use of the following consequence, which is a weaker version of \cite[Th.\,3]{Teyssier17}. Although its statement is local, it relies on the global result of Corollary \ref{cor:uniquenessglobal}.

\begin{corollaire}[J.-B.\,Teyssier]\label{cor:uniquenessglobalI}
Assume that $D$ is smooth. Let $\Psi_{x_o}\subset\cO_{X,x_o}(*D)/\cO_{X,x_o}$ be a (not necessarily good) finite subset, and, for any $\psi\in\Psi_{x_o}$, let $\cR_\psi$ be a germ of regular $D$-meromorphic flat bundle at $x_o$. Set $\cN=\bigoplus_{\psi\in\Psi_{x_o}}(\cE^\psi\otimes\cR_\psi)$. Let $(\cM,\iso_{\wh D}),(\cM',\iso'_{\wh D})$ be two $\cN$\nobreakdash-marked $D$-meromorphic flat bundles in the neighbourhood of $x_o$ and let \hbox{$\gamma:(\CC,0)\to(X,D)$} be a germ of curve whose image is not contained in $D$. Then
\[
(\cM,\iso_{\wh D})\simeq(\cM',\iso'_{\wh D})\iff\gamma^+(\cM,\iso_{\wh D})\simeq\gamma^+(\cM',\iso'_{\wh D}).
\]
\end{corollaire}

\begin{proof}
The point is to prove the implication $\Leftarrow$ and we assume that the isomorphism of the right-hand side holds for some curve $\gamma$. We shall denote by $X$ a small (connected) neighbourhood of $x_o$ where all data are defined. Let us first assume that $\gamma(0)=x_o$. There exists (\cf \cite[Lem.\,9.11]{Bibi10}) a projective modification $\modif:X'\to X$ that is an isomorphism away from~$D$ and the normal crossing divisor $D':=\modif^{-1}(D)$ such that $\modif^+\cN$ is good. This result is much easier than the general result on the resolution of turning points obtained by K.\,Kedlaya \cite{Kedlaya10} (and T.\,Mochizuki \cite{Mochizuki08} in the algebraic case). We can moreover assume that~$\gamma$ lifts as a curve~$\gamma'$ that is transverse to $\modif^{-1}(x_o)$ at a smooth point. We can apply Corollary \ref{cor:uniquenessglobal} to $\modif^+(\cM,\iso_{\wh D}),\modif^+(\cM',\iso'_{\wh D})$, and deduce that both are isomorphic. It follows from Lemma \ref{lem:modifHD} that $(\cM,\iso_{\wh D}),(\cM',\iso'_{\wh D})$ are isomorphic.

Assume now that $x:=\gamma(0)\neq x_o$. The first part of the proof applied at $x$ shows that $(\cM,\iso_{\wh D})_{|\nb(x)}\simeq(\cM',\iso'_{\wh D})_{|\nb(x)}$. Let $U$ be the open subset of $D$ on which $\cN$ is good. This is the complement in $D$ of the union of the zero sets of the meromorphic functions $\psi-\eta$ for $\psi,\eta\in\Psi_{x_o}$ and $\psi\neq\eta$. Since $D$ is smooth, this set is connected. By Corollary \ref{cor:genericuniqueness}, we deduce an isomorphism $(\cM,\iso_{\wh D})_{|U}\simeq(\cM',\iso'_{\wh D})_{|U}$. We conclude with Lemma \ref{lem:Hartogs}.
\end{proof}

\section{Proof of the main results}\label{sec:proofs}
We consider the setting and notation of Theorem~\ref{th:extension}.

\subsection{The most degenerate case}
If all coordinates of $t_o$ coincide, there is only one class $(\gamma_{t_o}^+\cM,\gamma_{t_o}^+\iso_{\wh T})$ since, up to a twist by $\cE^{t_{o,1}/z}$, $\gamma_{t_o}^+\cN$ has a regular singularity at $z=0$. The restriction induces then a surjective map between the two corresponding sets of isomorphism classes, and the injectivity is a consequence of the following more precise proposition.

\begin{proposition}\label{prop:mostdegenerate}
Assume that the connected open subset $U$ contains such a $t_o$. Let $(\cM_U,\iso_{\wh U})\in\Gamma(U,\cH_T(\cN))$. Then $\iso_{\wh U}$ can be lifted (in a unique way) as an isomorphism $\iso_U:\cM_U\isom\cN_{|U}$.
\end{proposition}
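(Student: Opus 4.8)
The uniqueness of the lift $\iso_U$ is automatic, since an isomorphism between two $\cN$-marked $T$-meromorphic flat bundles is unique; so the whole content is the existence of $\iso_U$, equivalently the isomorphism of $\cN$-marked bundles $(\cM_U,\iso_{\wh U})\simeq(\cN_{|U},\id)$ on $U$. The plan is to first trivialize the restriction to the transversal slice $\gamma_{t_o}$, and then to propagate this triviality over all of $U$ by reducing to the good case through a modification.

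For the first point, observe that since all coordinates of $t_o$ coincide, say to a common value $c$, we have $\gamma_{t_o}^+\cN=\cE^{-c/z}\otimes\bigoplus_i\gamma_{t_o}^+\cR_i$, which is regular up to the twist by $\cE^{-c/z}$ (recall each $\cR_i$ has regular singularity). A marked bundle on $(\CC_z,0)$ whose formal model is regular carries no Stokes phenomenon, the associated Malgrange--Sibuya $H^1$ being a point; hence $\gamma_{t_o}^*\iso_{\wh U}$ lifts and $\gamma_{t_o}^+(\cM_U,\iso_{\wh U})\simeq(\gamma_{t_o}^+\cN,\id)$.

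For the globalization I would use a projective modification $\modif:X'\to X$ over a neighbourhood of $U$, which is an isomorphism away from $T$ and such that $D':=\modif^{-1}(T)$ is a normal crossing divisor along which $\modif^+\cN$ becomes good; such a modification resolving the coalescence locus $\Delta$ exists by the arguments already used in the proof of Corollary \ref{cor:uniquenessglobalI} (\cf \cite[Lem.\,9.11]{Bibi10}) and is far simpler than the general Kedlaya--Mochizuki resolution, since here the bad divisor is the explicit hyperplane arrangement $\{t_i=t_j\}$. By Lemma \ref{lem:modifHD}, $\modif^+$ identifies $\cH_T(\cN)$ with $\modif_*\cH_{D'}(\modif^+\cN)$, so it suffices to prove $\modif^+(\cM_U,\iso_{\wh U})\simeq(\modif^+\cN,\id)$ on $X'$. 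Since $T$ is connected, $D'$ is connected, and one can arrange that $\gamma_{t_o}$ lifts to a curve $\gamma'$ transverse to $D'$ at a smooth point, along which the two $\modif^+\cN$-marked bundles agree by the first step (as $\modif\circ\gamma'=\gamma_{t_o}$ away from $D'$). The global uniqueness result in the good case, Corollary \ref{cor:uniquenessglobal}, then yields the desired isomorphism on $X'$, and hence $(\cM_U,\iso_{\wh U})\simeq(\cN_{|U},\id)$ on $U$ again by Lemma \ref{lem:modifHD}.

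The main obstacle is the construction of this global modification making $\cN$ good along a whole neighbourhood of $U$, together with the verification that $\gamma_{t_o}$ lifts transversally to a smooth point of $D'$: this is precisely the mechanism employed in Corollary \ref{cor:uniquenessglobalI}, the only new feature being that it must be carried out globally over $U$ rather than near a single point. That this causes no difficulty rests on the fact that $\Delta$ is an explicit linear configuration, so a single global resolution is available and $D'$ remains connected, allowing the passage from the local statement of Corollary \ref{cor:uniquenessglobalI} to the global statement of Corollary \ref{cor:uniquenessglobal}.
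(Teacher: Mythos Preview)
Your argument is correct, but it follows a different route from the paper's. You reduce to the global uniqueness statement of Corollary~\ref{cor:uniquenessglobal} after a full good resolution of $\cN$, using the triviality of $\gamma_{t_o}^+(\cM_U,\iso_{\wh U})$ as input. The paper instead performs only a \emph{single} blowup of $t_o$ in $X$: in the chart where $t_i=u_i\zeta$, $z=\zeta$, the exponents $-t_i/z$ become holomorphic, so all differences vanish along the exceptional divisor $E$. At a generic point of $E\cap T'$ one is therefore in the situation of Example~\ref{exem:decomp}, which directly forces $\modif^+\cM_U\simeq\modif^+\cN$ locally. Projecting back gives a point of $U\moins\Delta$ where $\iso_{\wh U}$ lifts; Corollary~\ref{cor:decomp} then spreads the lift over the connected smooth open set $U\moins\Delta$, and Lemma~\ref{lem:Hartogs} closes up across $\Delta$.

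What your approach buys is uniformity: it is essentially a global version of the mechanism behind Corollary~\ref{cor:uniquenessglobalI}, and would apply verbatim to any $t_o$ once one knows $\gamma_{t_o}^+$ lands on a trivial class. What the paper's approach buys is economy: it avoids constructing a global good resolution of the arrangement $\Delta$, relying only on the explicit one-blowup calculation and on Example~\ref{exem:decomp}, so the input is lighter. Your last paragraph correctly identifies the only soft spot in your argument, namely the existence of a single projective modification over all of $U$ making $\modif^+\cN$ good with $D'$ connected and $\gamma_{t_o}$ lifting transversally to $D^{\prime\,\smooth}$; this is indeed available for the linear arrangement $\Delta$, but is not something the paper proves or needs.
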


\begin{proof}
We will argue when $t_o$ is the origin of $T$, the other cases being obtained by an exponential twist. Let $\modif:X'\to X$ be the blowing-up of the origin in $X$ and let $T'$ be the strict transform of $T$ by $\modif$, so that $\modif_{|T'}:T'\to T$ is nothing but the blowing-up of~$T$ at the origin. Let $E=\modif^{-1}(0)\simeq\PP^n$ be the exceptional divisor, so that $D:=E\cup T'$ is a divisor with normal crossings.

There is a chart with coordinates $(u_1,\dots,u_n,\zeta)$ such that $E$ is defined as $\zeta=\nobreak0$ and~$\modif$ is given by $(u_1,\dots,u_n,\zeta)\mto(u_1\zeta,\dots,u_n\zeta,\zeta)$, so that $\modif^+\cN$ has regular singularities along $E$ away from $E\cap T'\simeq\PP^{n-1}$. On the other hand, one checks that $\modif^+\cN$ is good on a Zariski dense open subset of $E\cap T'$, \eg by computing the meromorphic functions $\modif^*((t_i-t_j)/z)$ in a chart with coordinates $(v_1,\dots,v_n,\zeta)$ where $\modif$ is given (say) by $(v_1,\dots,v_n,\zeta)\mto(v_1,v_1v_2,\dots,v_1v_n,v_1\zeta)$.

Let $t'_o$ be a point in this open subset of $E\cap T'$, which we can also consider as a point in $X'$. At this point, $\modif^+\cN$ has thus regular singularities along one of the components of $D$ passing through~$t'_o$. From Example \ref{exem:decomp} we conclude that $\modif^+\cM_U\simeq\modif^+\cN$ in some open neighbourhood of $t'_o$. Intersecting this neighbourhood with $U\moins\Delta$ gives a non-empty open subset in $U\moins\Delta$ where $\iso_{\wh U}$ can be lifted as an isomorphism $\cM_U\isom\cN$. We deduce from Corollary \ref{cor:decomp}, applied to the connected open subset $U\moins\Delta$, that $\iso_{\wh U}$ can be lifted as an isomorphism $\iso_{U\moins\Delta}:\cM_{|U\moins\Delta}\isom\cN_{|U\moins\Delta}$. We now conclude with Lemma \ref{lem:Hartogs}.
\end{proof}

\subsection{Proof of a variant of Theorem \ref{th:extension} in a special case}\label{subsec:proofvariant}
As an example for the method of proof of Theorem~\ref{th:extension}, we develop in this section a low-dimensional case, obtained by a generic two-dimensional slice of the pair $(T,\Delta)$ considered in the general case. 

The setting is as follows. We have $\dim T=2$, with coordinates $t=(t_1,t_2)$. We fix $a,b\in\CC$ such that $a,b,a-b\neq0$ and we consider the functions
\[
f_1(t)=t_1,\quad f_2(t)=t_1+t_2,\quad f_3(t)=t_1+a,\quad f_4(t)=t_2+b,
\]
and the elementary $T$-meromorphic flat bundle
\[
\cN=\bigoplus_{i=1}^4(\cE^{-f_i(t)/z}\otimes\cR_i),
\]
with $\cR_i$ regular along $T=\{z=0\}$. Then $\cN$ is good away from $\Delta=\{t_2=0\}$, so that the stratum is reduced to $\Delta$ and we set $U=T$. We also set
\[
\cN_o=\bigl(\cE^{-f_1(t)/z}\otimes(\cR_1\oplus\cR_2)\bigr)\oplus\bigoplus_{i=3}^4(\cE^{-f_i(t)/z}\otimes\cR_i),
\]
so that $\cN_o$ is a good decomposed $T$-meromorphic flat bundle.

We denote by $\modifu:X_1\to X$ the blowing-up of the origin in $X$. We set \hbox{$E_1\!=\!\modifu^{-1}(0)\!\simeq\!\PP^2$}, we denote by $T_1$ the strict transform of $T$ by $\modifu$, so that $\modif_{1|T_1}:T_1\to T$ is the blowing-up of the origin in $T$. The strict transform $\Delta_1$ of $\Delta$ is contained in $T_1$ and intersects $\modif_{1|T_1}^{-1}(0)=E_1\cap T_1\simeq\PP^1$ at one point $\delta_1$. The restriction $\modif_{1|\Delta_1}:\Delta_1\to \Delta$ is an isomorphism.

\begin{figure}[htb]
\begin{center}
\includegraphics[width=\textwidth]{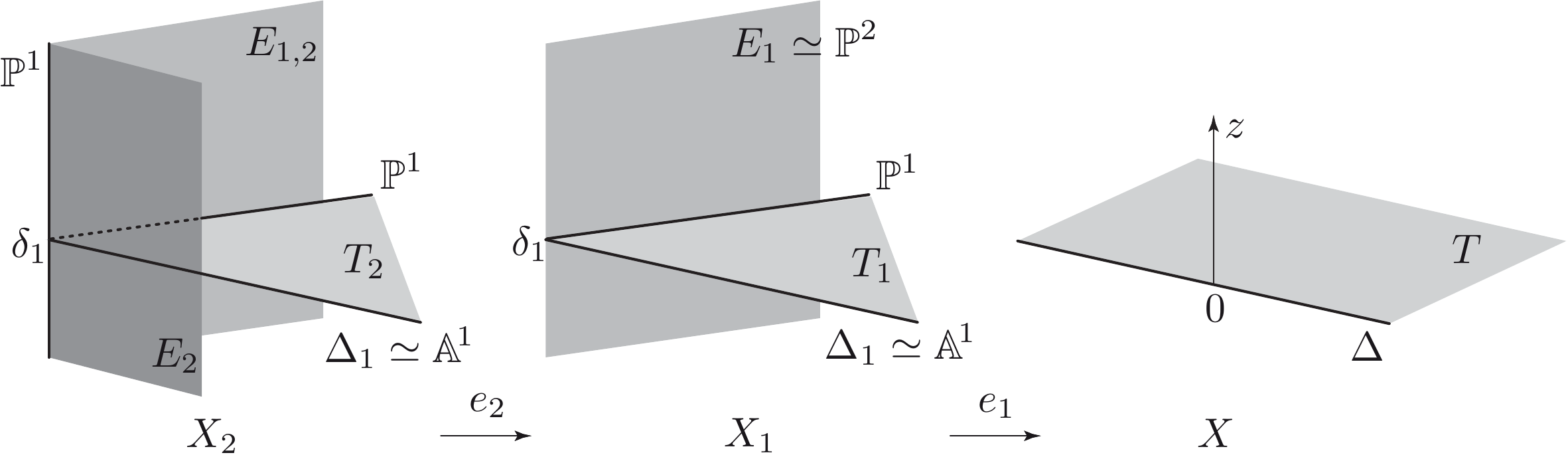}
\caption{\label{fig:X2}Simplified representation of the blowing-ups $e_1$ and $e_2$.}\vspace*{-.7\baselineskip}
\end{center}
\end{figure}

We denote by $\modifd:X_2\to X_1$ the blowing-up of $\Delta_1$ in $X_1$. We set
\[
E_2=\modifd^{-1}(\Delta_1)\simeq\PP^1\times\Delta_1,
\]
we denote by $E_{1,2}$ the strict transform of $E_1$, so that $\modif_{2|E_{1,2}}:E_{1,2}\to E_1$ is the blowing-up of $\delta_1$ in $E_1$ and $\modif_{2|E_{1,2}}^{-1}(\delta_1)\simeq\PP^1\times\{\delta_1\}$. It is standard to check that $E_{1,2}$, being the complex blow-up of $\PP^2$ at one point, is simply connected.\footnote{One can regard $E_{1,2}$ as the union of $\CC^2$ and the union of two $\PP^1$ that intersect at one point; both sets are simply connected, and the second one has an open neighbourhood $W$ that retracts onto it, hence is also simply connected. Since $W\cap\CC^2$ is connected, the van Kampen theorem gives the result.}  Let $T_2$ be the strict transform of $T_1$. Then $\modif_{2|T_2}:T_2\to T_1$ is an isomorphism since it is the blowing-up of~$\Delta_1$ of codimension one in~$T_1$. We also have $T_2\cap E_{1,2}\simeq\PP^1$ since it is the blow-up of $\PP^1$ at $\delta_1$. We thus regard~$\delta_1$ as a point in $X_2$ and $\Delta_1$ as the subset $T_2\cap E_2$ in~$X_2$. The geometric setting at $\delta_1$ in~$X_2$ is pictured in Figure \ref{fig:X2}.

We set $\modif=\modifu\circ\modifd:X_2\to X$. We denote by $\varpi:\wt X_2\to X_2$ the real oriented blowing-up of~$X_2$ along the components $(T_2,E_{1,2},E_2)$ of $D:=\modif^{-1}(T)$.

\begin{lemme}\label{lem:StDNo}
The sheaf $\St_D(\modif^+\cN_o)$ is locally constant on $\modif^{-1}(T)$.
\end{lemme}

\begin{proof}
Using the notation of Section \ref{subsec:verygoodformal}, we note that the pushforward $\modif_*$ induces an isomorphism $\cH_D(\modif^+\cN_o)\isom\modif^{-1}\cH_T(\cN_o)$. By Proposition \ref{prop:locconstgeneric}, $\cH_T(\cN_o)$ is a locally constant sheaf on $T$, hence so is $\cH_D(\modif^+\cN_o)$ on $D$. The assertion follows then from Theorem \ref{th:HSt}.
\end{proof}

\begin{lemme}\label{lem:StDN}
The $D$-meromorphic flat bundle $\modif^+\cN$ is good and the sheaf $\St_D(\modif^+\cN)$ is constant when restricted to $E_{1,2}$.
\end{lemme}

\begin{proof}
Since $E_{1,2}$ is simply connected, it is enough to prove local constancy. Let us write down the charts of the various blow-ups. We cover $X_1$ by three charts $X_1(i)$ ($i=1,2,3$), with coordinates $(u_i,v_i,\zeta_i)$ so that $\modif_1$ is given respectively by the formulas:
\[
X_1(1):
\begin{cases}
t_1=u_1\zeta_1,\\
t_2=v_1\zeta_1,\\
z=\zeta_1,\\
\end{cases}
\quad
X_1(2):
\begin{cases}
t_1=u_2v_2,\\
t_2=v_2,\\
z=v_2\zeta_2,\\
\end{cases}
\quad
X_1(3):
\begin{cases}
t_1=u_3,\\
t_2=u_3v_3,\\
z=u_3\zeta_3.\\
\end{cases}
\]
We note that the charts $X_1(1)$ and $X_1(2)$ can be regarded as contained in $X_2$, since they do not intersect the center $\Delta_1=\{u_3=\zeta_3=0\}$ of the blowing-up $\modif_2$.
\begin{enumerate}
\item\label{pf:StDN1}
In the chart $X_1(1)$, we have $X_1(1)\cap T_1=\emptyset$ and $X_1(1)\cap E_1=\{\zeta_1=0\}$. One checks that $\modif_1^+\cN$ is good there, and more precisely $\exp\modif_1^*(-t_2/z)$ is holomorphic there and the block-diagonal morphism $(\id,\exp\modif_1^*(-t_2/z)\id,\id,\id)$ induces an isomorphism between $\modif_1^+\cN_o$ and $\modif_1^+\cN$.
\item\label{pf:StDN2}
In the chart $X_1(2)$, we have $X_1(2)\cap T_1=\{\zeta_2=0\}$ and \hbox{$X_1(2)\cap E_1=\{v_2=0\}$}. One checks that $\modif_1^+\cN$ is good there. Although $\exp\modif_1^*(-t_2/z)=\exp(-1/\zeta_2)$ is not holomorphic near $E_1\cap T_1$, we claim that the block-diagonal morphism corresponding to $(\id,\exp\modif_1^*(-t_2/z)\id,\id,\id)$ induces an isomorphism between $\cAut^\rdD(\modif_1^+\cN_o)$ and $\cAut^\rdD(\modif_1^+\cN)$ on $\varpi^{-1}(E_1\cap T_1)$. Indeed, this morphism only affects the blocks $ij$ with $i\neq j$ and $i$ or $j$ equal to $2$. The blocks $12$ and $21$ of a section of $\cAut^\rdD(\modif_1^+\cN)$ are zero, since $\exp\modif_1^*((f_2-f_1)/z)=\exp\modif_1^*(-t_2/z)$ is nowhere of rapid decay, and so are the blocks $12$ and $21$ of $\cAut^\rdD(\modif_1^+\cN_o)$. For the blocks $23$, $32$, $24$ and $34$, which may be nonzero, multiplying by $\exp\modif_1^*(-t_2/z)$ does not affect the leading term of the exponential, hence neither does it affect the rapid decay condition.
\end{enumerate}

At this step, we have proved that $\cAut^\rdD(\modif_1^+\cN_o)$ and $\cAut^\rdD(\modif_1^+\cN)$ are isomorphic on $\varpi^{-1}(E_{1,2})\moins\varpi^{-1}(E_2)$, hence so are $\St_D(\modif^+\cN_o)$ and $\St_D(\modif^+\cN)$ on $E_{1,2}\moins E_2$.

\begin{enumerate}\setcounter{enumi}{2}
\item\label{pf:StDN3}
We now blow up the chart $X_1(3)$ along the ideal $(v_3,\zeta_3)$ of $\Delta_1$, giving rise to the charts $X_2(3\textup a)$ and $X_2(3\textup b)$ of $X_2$, with respective coordinates $(u_3,w_1,\eta_1),(u_3,w_2,\eta_2)$ satisfying
\[
X_2(3\textup a):
\begin{cases}
v_3=w_1\eta_1,\\
\zeta_3=\eta_1,
\end{cases}
\quad
X_2(3\textup b):
\begin{cases}
v_3=w_2,\\
\zeta_3=w_2\eta_2.
\end{cases}
\]
\end{enumerate}
Then one checks that $\modif^+\cN$ is good and the same argument as given in \eqref{pf:StDN2} above gives an isomorphism on $\varpi^{-1}(E_{1,2})$ between $\cAut^\rdD(\modif^+\cN_o)$ and $\cAut^\rdD(\modif^+\cN)$, hence $\St_D(\modif^+\cN_o)$ and $\St_D(\modif^+\cN)$ are isomorphic on $E_{1,2}$. The lemma follows from Lemma \ref{lem:StDNo}.
\end{proof}

\subsubsection*{Proof of the surjectivity of $\gamma_{t_o}^+$}
Recall that, here, $S_o=\Delta$ and that we have fixed $t_o\in\Delta$.

\subsubsection*{Step $1$: extension to $E_{1,2}$}
The curve $\gamma_{t_o}$ lifts as a curve $\gamma_{t'_o}:(\CC,0)\to(X_2,t'_o)$ transverse to $E_{1,2}$ at a point $t'_o$ in $D^\smooth$. We have $\cH_0(i_{t_o}^+\cN)=\cH_0(i_{t'_o}^+\modif^+\cN)$. By Proposition \ref{prop:locconstgeneric}, this set is identified with $i_{t'_o}^{-1}\cH_D(\modif^+\cN)$. Since $\cH_D(\modif^+\cN)_{|E_{1,2}}$ is constant, according to Lemma \ref{lem:StDN} and Theorem \ref{th:HSt}, this set is also identified with $\Gamma(E_{1,2},\cH_D(\modif^+\cN))$. An element of $\cH_0(i_{t_o}^+\cN)$ defines thus a pair $(\cM,\iso_{\wh D})$ on some neighbourhood of $E_{1,2}$ in $X_2$.

\subsubsection*{Step $2$: extension to $E_{1,2}\cup\Delta_1$}
Now we apply Proposition \ref{prop:locconstcodimtwo} to our section of $\cH_D(\modif^+\cN)_{|E_{1,2}}$. Since $\Delta_1$ retracts to a neighbourhood of $\delta_1$, the section extends to a neighbourhood of $\Delta_1$ in a unique way.

\subsubsection*{Step $3$: extension to $E_{1,2}\cup E_2$}
The restriction to \hbox{$(E_{1,2}\cap E_2)\!\moins\!\{\delta_1\}$} of the section of $\cH_D(\modif^+\cN)_{|E_{1,2}}$ constructed in Step~$1$ extends uniquely as a section of $\cH_D(\modif^+\cN)_{E_2\moins\Delta_1}$ by Proposition \ref{prop:locconstgeneric}. The restriction of the section constructed in Step~$2$ to a punctured neighbourhood of $\Delta_1$ in~$E_2$ coin\-cides with the restriction to this punctured neighbourhood of this new extension, by uniqueness, since they coincide in the neighbourhood of $\delta_1$.

\subsubsection*{Step $4$: extension to $D=E_{1,2}\cup E_2\cup T_2$}
The section constructed in Step~$3$ exists in some neighbourhood of $E_{1,2}\cup E_2$ in $D$. There exists a fundamental basis of neighbourhoods $U$ of $E_{1,2}\cup E_2$ in $D$ such that the inclusion $U\subset D$ induces an isomorphism $\pi_1(U)\isom\pi_1(D)$, as seen by taking the pullback by $\modif$ of a suitable basis of neighbourhoods of~$\Delta$ in $T$. By arguing like in Corollary \ref{cor:extensiontop}, the section constructed in Step~$3$, defined on such a neighbourhood $U$, extends in a unique way to a section of $\cH_D(\modif^+\cN)$ defined on $D$. Lemma \ref{lem:modifHD} enables us to conclude.\qed

\subsection{Proof of Theorem \ref{th:extension}}
We now consider the general case in Theorem \ref{th:extension}. Our first aim is to prove, in the context of Theorem~\ref{th:extension}, an analogue of Propositions \ref{prop:locconstgeneric} and \ref{prop:locconstcodimtwo} on each stratum of $\Delta$. Let $t_o\in\Delta\subset T$ and let $S(t_o)$ be its stratum.

\begin{proposition}\label{prop:locconstdegenerate}
When restricted to $S(t_o)$, the sheaf $\cH_T(\cN)$ is a locally constant sheaf of pointed sets, and its fiber at $t_o$ is in bijection with $\cH_0(\gamma_{t_o}^+\cN)$.
\end{proposition}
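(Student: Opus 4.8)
The plan is to reduce the statement, which concerns the \emph{non-good} model $\cN$ along the stratum $S(t_o)$, to the good case of Propositions \ref{prop:locconstgeneric} and \ref{prop:locconstcodimtwo} by resolving the turning points of $\cN$, following the template of Section \ref{subsec:proofvariant}. Here $D=T=\{z=0\}$ is smooth and $\cN$ is good precisely on $T\moins\Delta$; the failure of goodness along $S(t_o)$ comes only from the vanishing of the numerators $t_i-t_j$ of the differences $\varphi_i-\varphi_j=-(t_i-t_j)/z$ for those indices $i,j$ whose coordinates coincide at $t_o$. First I would fix $t_o\in\Delta$ with stratum $S(t_o)$ and, after an exponential twist grouping the coordinates according to the partition defined by $S(t_o)$, construct (as in \cite[Lem.\,9.11]{Bibi10}, already used in the proof of Corollary \ref{cor:uniquenessglobalI}) a sequence of blowing-ups $\modif:X'\to X$ with centers lying over $S(t_o)$ such that $D:=\modif^{-1}(T)$ is a normal crossing divisor and $\modif^+\cN$ is good along $D$. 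One arranges in addition that the transversal curve $\gamma_{t_o}$ lifts as a curve $\gamma_{t'_o}$ transverse to $D$ at a smooth point $t'_o$ of the exceptional locus.

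By Lemma \ref{lem:modifHD} we get an isomorphism $\cH_T(\cN)\isom\modif_*\cH_{D}(\modif^+\cN)$, so it suffices to analyse $\cH_{D}(\modif^+\cN)$ over $\modif^{-1}(S(t_o))$. Since $\modif^+\cN$ is good, Propositions \ref{prop:locconstgeneric} and \ref{prop:locconstcodimtwo}, together with Theorem \ref{th:HSt}, show that $\cH_{D}(\modif^+\cN)$ is a locally constant sheaf of pointed sets along each stratum of $D$, its fiber at a smooth point being computed by a transversal slice. Local constancy of $\cH_T(\cN)$ along $S(t_o)$ would then follow from the fact that, as $t$ varies in $S(t_o)$, the coincidence pattern of the coordinates is preserved and the entire construction (model, blowing-ups, stratification of $D$) is carried along by translation, so that $\modif_*\cH_{D}(\modif^+\cN)$ is locally constant along $S(t_o)$.

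It remains to identify the fiber at $t_o$. Since $\modif$ is proper, this fiber equals $\Gamma(\modif^{-1}(t_o),\cH_{D}(\modif^+\cN))$, while the fiber at the smooth point $t'_o$ is, by $\gamma_{t'_o}^+$ and Proposition \ref{prop:locconstgeneric}, in bijection with $\cH_0(\gamma_{t'_o}^+\modif^+\cN)=\cH_0(\gamma_{t_o}^+\cN)$. I must therefore show that restriction to the germ at $t'_o$ induces a bijection $\Gamma(\modif^{-1}(t_o),\cH_{D}(\modif^+\cN))\isom\cH_{D}(\modif^+\cN)_{t'_o}$, \ie that a section over $\modif^{-1}(t_o)$ of a sheaf locally constant on the strata of $D$ is determined by its germ at $t'_o$. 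This is the general form of the four-step patching carried out explicitly in Section \ref{subsec:proofvariant}: extend first over the (simply connected) strict transform of the principal exceptional component, then uniquely to the centers of the subsequent blowing-ups and across the remaining exceptional components by Proposition \ref{prop:locconstcodimtwo}, and finally to the strict transform of $T$, the successive extensions being compatible by the uniqueness of liftings.

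The main obstacle is this last step. In the good-model situation the fiber identification is immediate from local constancy on a single transversal slice, whereas here the fiber is read off the \emph{entire} exceptional divisor, and one must organize its components and their intersections so that local constancy on the top-dimensional stratum propagates coherently to every deeper stratum. In the two-dimensional model of Section \ref{subsec:proofvariant} this rested on the simple-connectivity of the blow-up of $\PP^2$ at a point; in general I expect to need an induction on the number of coalescing groups, equivalently on the blow-up tree, producing a distinguished simply connected exceptional component through which every global section factors, together with a check that the unique extensions across the remaining strata glue. I expect the delicate point to be combinatorial and topological rather than analytic.
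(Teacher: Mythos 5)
Your overall strategy --- blow up over $S(t_o)$ to reach a good model, transport the problem through Lemma \ref{lem:modifHD}, use local constancy on the strata of the exceptional divisor, and identify the fiber with the germ at one smooth point $t'_o$ --- is the one the paper follows, but there are two genuine gaps, and they sit exactly where the content of the proof lies. First, you invoke a full resolution of turning points so that $\modif^+\cN$ becomes good along all of $D=\modif^{-1}(T)$. The paper deliberately does not do this: after the first blow-up of the stratum, the turning locus is the strict transform $\Delta_1$ of $\Delta$, a hyperplane arrangement times $S(t_o)$, and resolving it completely would require iterated blow-ups along all of its self-intersection strata, producing an exceptional divisor whose incidence combinatorics and fundamental groups are precisely the ``combinatorial and topological'' difficulty you defer. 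Instead the paper removes the codimension-three locus $\Delta'_1$ (the pairwise intersections of the components of $\Delta_1$), blows up only $\Delta_1^\circ$, obtains goodness on $D_2^\circ$ only, carries out the four-step patching there, and recovers the missing codimension-two piece of the divisor at the very end by Malgrange's extension theorem (Proposition \ref{prop:Malgrangeextension}). This detour through $X_1^\circ$ is what keeps the topology of the exceptional locus manageable (a product/disc-bundle structure over $E_{1,2}^\circ$).

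Second, and more seriously, the step you leave as an ``expected induction'' --- that a section of $\cH_D(\modif^+\cN)$ over the whole exceptional fiber is determined by, and can be reconstructed from, its germ at the single point $t'_o$ --- is not a formal consequence of local constancy on the strata of $D$: Propositions \ref{prop:locconstgeneric} and \ref{prop:locconstcodimtwo} control each stratum separately and say nothing about propagation across stratum closures. The paper's engine for this is the auxiliary good \emph{decomposed} model $\cN_o$ of \eqref{eq:cNo} together with the explicit block-diagonal exponential twist with blocks $(\id,(\exp(-\modif^*\tau_i^r)\id)_{i\in I'_r})$, which identifies $\cAut^\rdD(\modif^+\cN_o)$ with $\cAut^\rdD(\modif^+\cN)$ over $\varpi^{-1}(E_{1,2})$; since $\St_D(\modif^+\cN_o)\simeq\modif^{-1}\St_T(\cN_o)$ is locally constant on all of $D$ (Lemma \ref{lem:StDNo}) and $E_{1,2}$ is simply connected, this makes $\St_D(\modif^+\cN)$ \emph{constant on all of $E_{1,2}$, across its intersections with the other components} (Lemma \ref{lem:StDN}). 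That constancy is what launches Step one of Section \ref{subsec:proofvariant} and makes the subsequent unique extensions glue; your proposal contains no substitute for this comparison, so the reduction is not complete as written.
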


\begin{proof}
We fix $t_o\in\Delta\subset T$ and we work locally at $t_o$. We can decompose $\{1,\dots,n\}$ as $\bigsqcup_{r\in R}I_r$ such that, for every $r\in R$, we have $\{i,j\}\subset I_r$ if and only if $t_{o,i}=t_{o,j}$. For each $r\in R$, we choose an element in $I_r$ that we denote by~$r$ and we set $I'_r=I_r\moins\{r\}$. We then~set $p=\#R=\dim S(t_o)$, $m=n-p$, and
\begin{equation}\label{eq:cNo}
\cN_o=\bigoplus_{r\in R}(\cE^{-t_r/z}\otimes\cR_{I_r}),\quad \cR_{I_r}:=\bigoplus_{i\in I_r}\cR_i.
\end{equation}
We fix a neighbourhood of $t_o$ in $T$ of the form $V\times W$, such that $V$ has the coordinates $(t_r)_{r\in R}$ and $W$ has coordinates $(\tau_i^r)_{r\in R,\,i\in I'_r}$, so that for $r\in R$ and $i\in I'_r$, we have $t_i=t_r+\tau_i^r$, and small enough such that $\Delta\cap (V\times W)$ is given by the equations $\tau_i^r=0$ ($r\in R,\,i\in I'_r$) and $\tau_i^r-\tau_j^r$ ($r\in R,\,i\neq j\in I'_r$). We have $S(t_o)\cap (V\times W)=V\times\{0\}$. We now denote this neighbourhood by $T$, and set $X=(V\times W)\times\CC_z=:V\times Y$.

Let $\modif_1:X_1\to X$ be the blowing-up of this stratum, \ie that of the ideal
\[
\bigl((\tau_i^r)_{r\in R,\,i\in I'_r},z\bigr).
\]
We have $X_1=V\times Y_1$ with obvious notation. Every object below is a product of~$V$ with the corresponding object in $Y_1$. The pullback $D_1:=\modif_1^{-1}(T)$ is the union of the exceptional divisor $E_1=\modif_1^{-1}(S(t_o))$ (in this local setting, $E_1\simeq S(t_o)\times\PP^m$), and the strict transform $T_1$ of $T$. The exceptional divisor of $\modif_{|T_1}:T_1\to T$ is equal to $E_1\cap T_1\simeq S(t_o)\times\PP^{m-1}$. Moreover, $T_1$ is a disc-bundle in the normal bundle of $E_1\cap T_1$ in $T_1$. Similarly, the strict transform $\Delta_1\subset T_1$ of $\Delta$ is a disc-bundle over $\delta_1:=E_1\cap\Delta_1$ (the product of an arrangement of projective hyperplanes in~$\PP^{m-1}$ with~$S(t_o)$). We~denote by $\Delta'_1$ the union of the two-by-two intersections of the components of $\Delta_1$ (it has codimension three in $X_1$) and by $\delta'_1$ its intersection with~$E_1$. Lastly, we set $X_1^\circ=X_1\moins\Delta'_1$, and similarly for the other objects. In particular, $\delta_1^\circ$ is non-singular and $\Delta_1^\circ$ is a disc-bundle over it.

We now argue as for the simple case of Section \ref{subsec:proofvariant}. We denote by $\modif_2:X_2^\circ\to X_1^\circ$ the blowing-up of $\Delta_1^\circ$ in $X_1^\circ$ and we obtain $D_2^\circ:=E_2^\circ\cup E_{1,2}^\circ\cup T_2^\circ$. Then Lemmas \ref{lem:StDNo} and \ref{lem:StDN} hold in this setting. The isomorphism to be considered is block diagonal with blocks indexed by $R$, each block having the form $(\id,(\exp(-\modif^*\tau_i^r)\id)_{i\in I'_r})$.

For the surjectivity of $\gamma_{t_o}^+$, there is no change to be done in Step~$1$ of Section \ref{subsec:proofvariant}. For the other steps, we use the disc-bundle structure over the intersection with $E_{1,2}^\circ$ of all the objects involved, instead of the structure of a product with $\CC$. The same homotopy argument applies.

Let us now conclude with the surjectivity of $\gamma_{t_o}^+$. Given $(\cM^{t_o},\iso_{\wh0})$ with model $\gamma_{t_o}^+\cN$, corresponding to an element of
\[
\cH_0(\gamma_{t_o}^+\cN)\simeq\cH_0(\gamma_{t'_o}^+\modif^+\cN)\simeq\cH_{D_2^\smooth}(\modif^+\cN)_{t'_o},
\]
we have extended it in a unique way as a section of $\cH_{D_2^\circ}(\modif^+\cN)$, which corresponds thus to a pair $(\cM_2^\circ,\iso_{\wh{D_2^\circ}})$ with model $\modif^+\cN$ satisfying $\gamma_{t'_o}^+(\cM_2^\circ,\iso_{\wh{D_2^\circ}})=(\cM^{t_o},\iso_{\wh0})$. Applying $\modif_{2+}$ and according to Lemma \ref{lem:modifHD}, we obtain a pair $(\cM_1^\circ,\iso_{\wh{D_1^\circ}})$ on $X_1^\circ$. Owing to the theorem of B.\,Malgrange (Proposition \ref{prop:Malgrangeextension}), this pair extends in a unique way as a pair $(\cM_1,\iso_{\wh{D_1}})$ on $X_1$. Lastly, applying $\modif_{1+}$ and according to Lemma \ref{lem:modifHD}, we obtain a pair $(\cM,\iso_{\wh T})$ as wanted.

On the other hand, injectivity of $\gamma_{t_o}^+$ is given by the proof above, since there is no choice in any extension procedure. This shows that, on $V$, the sheaf $\cH_T(\cN)_{|S(t_o)}$ is constant, with fibre given by applying $\gamma_{t_o}^+$.
\end{proof}

\begin{proof}[End of the proof of Theorem \ref{th:extension}]
We first define the notion of a star-shaped open set. Let $S_o$ be a stratum of $\Delta$ and set $S_o^\star=\bigcup_{\ov S\supset S_o}\ov S$ be its star (where $S$ varies in the set of strata of the natural stratification of $\Delta$, so that $\ov S$ is a linear subspace). By choosing the coordinates as in the proof of Proposition \ref{prop:locconstdegenerate}, we find a product decomposition $S_o^\star\!\simeq\!S_o\!\times\!\CC^m$, and we consider the corresponding projection $p_o\,{:}\,S_o^\star\!\to\!S_o$. We endow~$\CC^m$ with its standard Euclidean metric.

\begin{definition}\label{def:starshaped}
An open subset $U\subset T$ is said to be \emph{star shaped} with respect to $U\cap\nobreak S_o$ if $U$ contains the Euclidean ball centered at the origin of the linear subspace $\ov{S(t)}\cap\nobreak p_o^{-1}(p_o(t))$ containing $t$, for any $t\in U$.
\end{definition}

For a star-shaped open set $U$ with respect to $U\cap S_o$, the flow of the radial vector field in each stratum $S$ containing $S_o$ in its closure induces a deformation retraction of $U\cap S$ to $U\cap S_o$. On the other hand, there exists a fundamental system of open neighbourhoods $V_S$ of $U\cap S_o$ in $U\cap \ov S$ that are star shaped. So $V_S\cap S\subset U\cap S$ induces an isomorphism of fundamental groups.

Assume we are given $(\cM^{t_o},\iso_{\wh0})$ with model $\gamma_{t_o}^+\cN$ for $t_o\in S_o$. Since $\cH_T(\cN)_{|S_o}$ is locally constant with fibre $\cH_0(\gamma_{t_o}^+\cN)$ (Proposition \ref{prop:locconstdegenerate}) and $U\cap S_o$ is simply connected, $(\cM^{t_o},\iso_{\wh0})$ extends in a unique way as a section of $\cH_T(\cN)_{|U\cap S_o}$, and we find $(\cM,\iso_{\wh T})$ defined in some neighbourhood of $U\cap S_o$ in~$T$. Given a stratum $S$ with $\ov S\supset\nobreak S_o$, $(\cM,\iso_{\wh T})$ is defined on some $V_S$ as above and defines a section of $\cH_T(\cN)_{|V_S\cap S}$. Since $\cH_T(\cN)_{|S}$ is locally constant and since \hbox{$\pi_1(V_S\cap S,\star)\to\nobreak\pi_1(U\cap\nobreak S,\star)$} is an isomorphism, this section extends to $U\cap S$ (\cf Corollary \ref{cor:extensiontop}) and we obtain $(\cM,\iso_{\wh T})$ on $U\cap S$, hence on a neighbourhood of $U\cap S$ in $U$. Since $U$ does not cut any stratum $S'$ such that $\ov{S'}\supsetneq S_o$, it is covered by the strata $U\cap S$ with $S$ such that $\ov S\supset S_o$. By uniqueness (Corollary \ref{cor:uniquenessglobalI}), the extensions on the neighbourhoods of the various strata glue together, and give rise to $(\cM,\iso_{\wh T})$ on $U$.
\end{proof}

\subsection{Proof of Corollaries \ref{cor:extension} and \ref{cor:extensionf}}
The first part of Corollary \ref{cor:extension} is contained in the theorem. For the second part, let $t_o\in V$ and let $(\cM,\iso_{\wh T})_V$ and $(\cM',\iso'_{\wh T})_V$ be two $\cN$-marked $T$-meromorphic flat bundles on $V\times(\CC_z,0)$ whose restrictions at $t_o$ are equal to $(\cM^{t_o},\iso_{\wh0})$. Then, by Theorem \ref{th:extension}, $(\cM,\iso_{\wh T})_V$ and $(\cM',\iso'_{\wh T})_V$ coincide on some $\nb(t_o)\subset T$, hence on a nonempty open set in $V\moins(\Delta\cap V)$. Since the latter is connected, they coincide on $V\moins(\Delta\cap V)$, by an argument similar to that used in Corollary \ref{cor:decomp}. We conclude with Lemma \ref{lem:Hartogs}.\qed

\begin{proof}[Proof of Corollary \ref{cor:extensionf}]
The surjectivity of~$\gamma_{y_o}^+$ and the injectivity of $f^+$ are obvious from Theorem \ref{th:extension}, since we can identify $(\cM^{y_o},\iso_{\wh0})$ with $(\cM^{t_o},\iso_{\wh0})$, due to the identification $\gamma_{y_o}^+\cN_Y=\gamma_{t_o}^+\cN$, and we have $\gamma_{t_o}^+=\gamma_{y_o}^+\circ f^+$. It is then enough to prove the injectivity of $\gamma_{y_o}^+$. This is an immediate consequence of Corollary \ref{cor:uniquenessglobalI}.
\end{proof}

\subsection{Application to the sheaf of Stokes torsors}\label{subsec:applStokes}
We note that the restriction of~$\cN$ to any stratum of the natural stratification of $T$ compatible with $\Delta$ is good, hence for each such stratum~$S$, the sheaf $\cH_S(i_S^+\cN)\simeq\St_S(i_S^+\cN)$ is a locally constant sheaf. On the other hand, we do not have much information on the sheaf $\St_T(\cN)$, except on the open dense stratum $T\moins\Delta$. However, the sheaf $\cH_T(\cN)$ is better behaved, and this will enable us to compare the various sheaves $\St_S(i_S^+\cN)$. Indeed, Theorem \ref{th:extension} can be interpreted as a constructibility theorem for the sheaf $\cH_T(\cN)$. 

Let us fix a stratum $S_o$ of $\Delta$ in~$T$, let $U$ be an open subset of $T$ containing $S_o$ and satisfying \ref{th:extension}\eqref{th:extensionb} and \eqref{th:extensionc}, and let us consider the following diagram:
\[
\xymatrix@C=1.5cm@R=.3cm{
&S_o\times\CC^m&\\
S_o=S_o\times\{0\}\ar@{^{ (}->}[r]^-{i_o}\ar[ddr]_{\id}& U\ar@{}[u]|\bigcup\ar[dd]^{p_o}&\ar@{_{ (}->}[l]_-{j_o}U\moins \Delta\ar[ddl]^{q_o}\\&&\\
&S_o&
}
\]
(\cf the notation in the proof of Proposition \ref{prop:locconstdegenerate}). We also denote by $i_o,j_o$ the complementary inclusions of $S_o$ and $T\moins S_o$ in $T$.

\begin{corollaire}[Constructibility of $\cH_T(\cN)$]\label{cor:constructible}
The sheaf $\cH_T(\cN)$ is constructible, and more precisely, for each stratum~$S_o$ of $\Delta$, each of the natural morphisms
\begin{starequation}\label{eq:poio}
p_{o*}\cH_T(\cN)_{|U}\to i_o^{-1}\cH_T(\cN)\To{i_o^+}\cH_{S_o}(i_{S_o}^+\cN)
\end{starequation}%
is an isomorphism (the left one comes from the sheaf-theoretic adjunction $\id\!\to\!i_{o*}i_o^{-1}$), and the natural composed morphism
\[
\cH_{S_o}(i_{S_o}^+\cN)\isom i_o^{-1}\cH_T(\cN)\to i_o^{-1}j_{o*}j_o^{-1}\cH_T(\cN)
\]
is injective. Moreover, the natural restriction morphism
\[
p_{o*}\cH_T(\cN)_{|U}\to q_{o*}\cH_T(\cN)_{|U\moins\Delta}
\]
is injective. Lastly, the natural morphism that one deduces from it together with \eqref{eq:poio} and the sheaf-theoretic adjunction $q_o^{-1}q_{o*}\to\id$:
\begin{starstarequation}\label{eq:poqo}
q_o^{-1}\cH_{S_o}(i_o^+\cN)\to\cH_T(\cN)_{|U\moins\Delta}
\end{starstarequation}%
is also injective. The image subsheaf of \eqref{eq:poqo} is characterized as follows: given $t\!\in\!U\moins\Delta$, setting \hbox{$t_o\!=\!q_o(t)\!\in\!S_o$}, a germ of section $(\cM^t,\iso_{\wh T,t})\in\cH_{T\moins\Delta}(\cN)_t\simeq\cH_0(i_t^+\cN)$ belongs to (the image of) $[q_o^{-1}\cH_{S_o}(i_o^+\cN)]_t\simeq\cH_{S_o}(i_o^+\cN)_{t_o}$ if and only if there exists a neighbourhood $\nb(t_o)$ of $t_o$ in $S_o$ and $(\cM,\iso_{\wh T})$ on $p_o^{-1}(\nb(t_o))$ whose germ at $t$ is~$(\cM^t,\iso_{\wh T,t})$.
\end{corollaire}

\begin{proof}
The statement is local on $S_o$. Each $t_o\in S_o$ has a fundamental system of $1$-connected neighbourhoods $V$ in $S_o$. On the other hand, each such $V$ has a fundamental system of neighbourhoods $U'$ in $T$ satisfying the properties \ref{th:extension}\eqref{th:extensiona}--\eqref{th:extensionc}. Let us denote by $i_o$ the inclusion $S_o\hto T$. We wish to prove that for each such pair $(V,U')$, the restriction map $i_o^+:\Gamma(U',\cH_T(\cN))\to\Gamma(V,\cH_{S_o}(i_o^+\cN))$ is a bijection, as the case $U'=p_o^{-1}(V)$ also implies that the composed morphism \eqref{eq:poio} is an isomorphism.

Since $V$ is simply connected and $\cH_{S_o}(i_o^+\cN)$ is locally constant (Proposition \ref{prop:locconstgeneric}), we have $\Gamma(V,\cH_{S_o}(i_o^+\cN))=i_{t_o}^{-1}\cH_{S_o}(i_o^+\cN)$. Moreover, the same proposition identifies $i_{t_o}^{-1}\cH_{S_o}(i_o^+\cN)$ with $\cH_0(i_{t_o}^+\cN)$ via $\gamma_{t_o}^+$. Lastly, Theorem \ref{th:extension} implies that $\gamma_{t^0}^+:\Gamma(U',\cH_T(N))\to\cH_0(i_{t_o}^+\cN)$ is a bijection. We conclude that $\Gamma(U',\cH_T(\cN))\to\Gamma(V,\cH_{S_o}(i_o^+\cN))$ is also a bijection. We then use Corollary \ref{cor:Hartogs} to conclude the first assertion.

For the second assertion, we need to prove that if $(\cM,\iso_{\wh T})$ and $(\cM',\iso'_{\wh T})$ defined on $p_o^{-1}(V)$ ($V$ as above) coincide on $q_o^{-1}(V)$, then they coincide. This follows from Hartogs theorem (Lemma \ref{lem:Hartogs}).

For the \eqref{eq:poqo}, we are left with proving that the adjunction morphism
\[
q_o^{-1}q_{o*}\cH_T(\cN)_{|U\moins\Delta}\to\cH_T(\cN)_{|U\moins\Delta}
\]
is injective. This amounts to proving that, if two sections of $\cH_T(\cN)$ on $q_o^{-1}(V)$ ($V,t_o$ as above) coincide at $t\in q_o^{-1}(t_o)$, they coincide everywhere on $q_o^{-1}(V)$. Since $q_o^{-1}(V)$ is also connected by the star-shaped property of $U$, we can apply the same argument as in Corollary \ref{cor:genericuniqueness} to obtain the desired property.

Let us prove the final assertion of the corollary. That $(\cM^t,\iso_{\wh T,t})\in\cH_{T\moins\Delta}(\cN)_t$ \hbox{belongs} to the image of $[q_o^{-1}\cH_{S_o}(i_o^+\cN)]_t$ means that there exists $(\cM^{t_o},\iso_{\wh T,t_o})\in\cH_{S_o}(i_o^+\cN)_{t_o}$, which we consider as a section $\sigma_o:S_o\supset\nb(t_o)\to\cH_{S_o}(i_o^+\cN)^\et$ of the sheaf (étalé) space $\cH_{S_o}(i_o^+\cN)^\et\to S_o$ (which is a covering since $\cH_{S_o}(i_o^+\cN)$ is locally constant) such that, if $\sigma:\nb(t)\to\cH_{T\moins\Delta}(\cN)^\et$ denotes the section corresponding to $(\cM^t,\iso_{\wh T,t})$, we have $\sigma(t)=(\sigma_o\circ q_o)(t)$, regarding $\sigma_o\circ q_o$ as a section
\[
q_o^{-1}(\nb(t_o))\to[q_o^{-1}\cH_{S_o}(i_o^+\cN)]^\et\to\cH_{T\moins\Delta}(\cN)^\et
\]
of the sheaf space $\cH_{T\moins\Delta}(\cN)^\et$ over $T\moins\Delta$ (which is also a covering). By~definition, $\sigma_o\circ q_o$ corresponds to the restriction to $q_o^{-1}(\nb(t_o))$ of $(\cM,\iso_{\wh T})$ constructed by Theorem \ref{th:extension} on $p_o^{-1}(\nb(t_o))$ from $(\cM^{t_o},\iso_{\wh0})$ corresponding to $\sigma_o$. Then the germ $(\cM,\iso_{\wh T})_t$ corresponds to $\sigma$ by the correspondence \eqref{eq:HSt}.

The converse is obtained similarly.
\end{proof}

On the one hand, let us consider the sheaf $\St_{S_o}(i_o^+\cN)=\St_{S_o}(i_o^+\cN_o)$ (\cf\eqref{eq:cNo}). Since $\cN_o$ is good on $S_o$, this is a locally constant sheaf of pointed sets on~$S_o$. On~the other hand, we have the locally constant sheaf $\St_{T\moins\Delta}(\cN)$ since $\cN$ is good on $T\moins\Delta$.

\begin{corollaire}[Comparison of sheaves of Stokes torsors]\label{cor:Stcomp}
There exist natural injective morphisms
\begin{starequation}\label{eq:localcomp}
\St_{S_o}(i_o^+\cN)\hto i_o^{-1}j_{o*}\St_{T\moins\Delta}(\cN).
\end{starequation}
and
\begin{starstarequation}\label{eq:qStoSt}
q_o^{-1}\St_{S_o}(i_o^+\cN)\hto \St_{T\moins\Delta}(\cN)_{|U\moins\Delta}.
\end{starstarequation}%
The image subsheaf of \eqref{eq:qStoSt} is characterized as follows: given $t\!\in\!U\moins\Delta$, setting \hbox{$t_o\!=\!q_o(t)\!\in\!S_o$}, a germ of section $\sigma_t\in\St_{T\moins\Delta}(\cN)_t\simeq\St_0(i_t^+\cN)$ belongs to (the image of) $[q_o^{-1}\St_{S_o}(i_o^+\cN)]_t\simeq\St_{S_o}(i_o^+\cN)_{t_o}$ if and only if there exists a neighbourhood $\nb(t_o)$ of $t_o$ in $S_o$ and $(\cM,\iso_{\wh T})$ on $p_o^{-1}(\nb(t_o))$ whose germ at $t$ corresponds to~$\sigma_t$ via \eqref{eq:HSt}.
\end{corollaire}

\begin{proof}
The morphism \eqref{eq:localcomp} is defined by the following diagram:
\[
\xymatrix@C=1.5cm{
\St_{S_o}(i_o^+\cN)\ar[d]_\wr\ar@{-->}[r]
&i_o^{-1}j_{o*}\St_{T\moins\Delta}(\cN)\ar[d]^\wr
\\
i_o^{-1}\cH_T(\cN)\ar@{^{ (}->}[r]
&i_o^{-1}j_{o*}j_o^{-1}\cH_T(\cN)
}
\]
The left vertical isomorphism is obtained by composing the isomorphism of Theorem~\ref{th:HSt} on $S_o$ ($\St_{S_o}(i_o^+\cN)\simeq\cH_{S_o}(i_o^+\cN)$) together with the isomorphism of Corollary~\ref{cor:constructible}. The right vertical isomorphism is given by Theorem \ref{th:HSt} on $T\moins\Delta$. Lastly, the lower horizontal morphism is injective, according to Corollary \ref{cor:Hartogs}. We define \eqref{eq:qStoSt} similarly and use \eqref{eq:poqo} for its injectivity. The last assertion is obtained similarly from the last assertion in Corollary~\ref{cor:constructible}.
\end{proof}

\section{Isomonodromic deformations}\label{sec:isomono}

\begin{notation}\mbox{}
\begin{enumerate}
\item
For a given square matrix $M$, we denote by $M=M^\diago+M^\nd$ its decomposition into the diagonal and non-diagonal parts.

\item
For $t_o\in\Delta$, we decompose $\{1,\dots,n\}=\bigsqcup_{r\in R}I_r$ such that, for every $r\in R$, we have $\{i,j\}\subset I_r$ if and only if $t_{o,i}=t_{o,j}$.
\end{enumerate}
\end{notation}

\subsection{Universal deformation of a Birkhoff normal form}
For $t\in T$, we denote by $\Lambda(t)$ the matrix $\diag(t_1,\dots,t_n)$. For $t_o\in T$, say that a system with matrix $(\sfrac{\Lambda(t_o)}{z}+B(z))\sfrac{\rd z}{z}$, with $B(z)$ holomorphic, is in \emph{Birkhoff normal form} if $B(z)=A_1$ is constant, that is,
\begin{equation}\label{eq:general}
\Bigl(\frac{\Lambda(t_o)}{z}+A_1\Bigr)\frac{\rd z}{z}.
\end{equation}

For the system \eqref{eq:general}, if $t_o\notin\Delta$, a theorem by B.\,Malgrange \cite{Malgrange83cb, Malgrange86} asserts that there exists a universal integrable defor\-mation of this system in the neighbourhood of $t_o$ (\cf also \cite[\S VI.3]{Bibi00b}). In particular (\cf \cite[\S VI.3.f]{Bibi00b}), there exists a holomorphic matrix $F^\nd_1(t)$ near $t_o$ with zeros on the diagonal, such that the system of the form~\eqref{eq:system}
\begin{equation}\label{eq:familyBirkhoff}
\Bigl(\frac{\Lambda(t)}{z}+[\Lambda(t),F^\nd_1(t)]+A^\diago_1\Bigr)\frac{\rd z}{z},
\end{equation}
where $A^\diago_1$ is the diagonal part of $A_1$, is integrable and restricts to \eqref{eq:general} at $t=t_o$. The integrable connection (on the trivial bundle) has matrix (\cf \cite[VI\,(3.12)]{Bibi00b})
\begin{equation}\label{eq:univintegr}
-\rd(\Lambda(t)/z)+\bigl([\Lambda(t),F_1^\nd(t)]+A_1^\diago\bigr)\,\frac{\rd z}{z}-[\rd\Lambda(t),F_1^\nd(t)]
\end{equation}
and is a universal integrable deformation of its restriction at each point of the neighbourhood where it exists. Moreover, there exists a $z$-formal base change that transforms \eqref{eq:univintegr} to the system
\begin{equation}\label{eq:univintegrform}
-\rd(\Lambda(t)/z)+A_1^\diago\,\frac{\rd z}{z}.
\end{equation}
The following two questions are natural:
\bgroup\let\theenumi\theequation
\par\smallskip
\refstepcounter{equation}\label{enum:univa}
\noindent\eqref{enum:univa}
If $t_o\in\Delta$, can we find an integrable deformation \eqref{eq:univintegr} of the Birkhoff normal form \eqref{eq:general} with $z$-formal normal form \eqref{eq:univintegrform}?
\par\smallskip
\refstepcounter{equation}\label{enum:univb}
\noindent\eqref{enum:univb}
Given a system \eqref{eq:system} defined on an open set $V$ of $T$ (so that its restriction at every $t\in V$ is in Birkhoff normal form), and given a matrix $F_1^\nd(t)$, holomorphic on $V$, assume that, in a small neighbourhood $W$ of $t_o\in V\moins\Delta$, \eqref{eq:system} underlies a~universal integrable deformation \eqref{eq:univintegr} of its restriction at $t=t_o$. Does this integrable deformation extend on $V$, in particular at points $t\in \Delta\cap V$, and has $z$-formal normal form \eqref{eq:univintegrform} on $V$?
\par\smallskip
\egroup

As an application of Theorem \ref{th:extension} we give an answer to \eqref{enum:univa} in Section \ref{subsec:defbirkhoff}. On the other hand, the results of \cite[Cor.\,1.1]{C-D-G17a} and \cite[Cor.\,2.1]{C-G17a} concern \eqref{enum:univb}.

\subsection{Deformation of a Birkhoff normal form}\label{subsec:defbirkhoff}

We will show that Question \eqref{enum:univa} for \eqref{eq:general} has a positive answer provided that Assumptions \ref{lem:decformto}\eqref{lem:decformtoa} and \eqref{lem:decformtob} below are fulfilled by $A_1$. Moreover, we make more precise the domain of existence of the integrable deformation.

Let $t_o\in \Delta\subset T$ and let us fix a Birkhoff normal form \eqref{eq:general}.

\begin{lemme}\label{lem:decformto}
With the following assumptions
\begin{enumeratea}
\item\label{lem:decformtoa}
$A_1^\nd\in\im\ad(\Lambda(t_o))$,\footnote{Recall that $\ad(\Lambda(t_o))(B)=[\Lambda(t_o),B]$ so that $A\in\im\ad(\Lambda(t_o))$ iff $A_{ij}=0$ whenever $i,j\in I_r$ for some $r\in R$.}
\item\label{lem:decformtob}
$A_1^\diago$ is partially non-resonant, \ie
\[
\forall r\in R,\;\forall i,j\in I_r,\quad A_1^\diago{}_{ii}-A_1^\diago{}_{jj}\notin\ZZ\moins\{0\},
\]
\end{enumeratea}
\noindent
which are always fulfilled if $t_o\notin\Delta$, there exists a $z$-formal base change of the form $\id+zP_1+\cdots$ that transforms the matrix of $\nabla^o$
\[
A(t_o,z)=\Big(\frac{\Lambda(t_o)}{z}+A_1\Big)\frac{\rd z}{z}
\]
to the diagonal matrix
\[
A^\diago(t_o,z)=\Big(\frac{\Lambda(t_o)}{z}+A_1^\diago\Big)\frac{\rd z}{z}.
\]
\end{lemme}

\begin{proof}
The result is standard (\cf \eg \cite[Prop.\,4.2]{C-D-G17a}), but we will give a proof valid on any algebraic closed field of characteristic zero instead of $\CC$, \ie not depending on the transcendental notion of fundamental solution. Firstly, one can find a formal base change $\wh F_o=\id-zF_1^\nd(t_o)+\cdots$ that transforms $A(t_o,z)$ to the formal matrix
\[
\wt A(t_o,z)=\Big(\frac{\Lambda(t_o)}{z}+A_1^\diago+zA_2+\cdots\Big)\frac{\rd z}{z},
\]
where $A_2,\dots$ are block diagonal with respect to $r\in R$, so that $\wt A(t_o,z)$ is also block diagonal. Each block ($r\in R$) is written as
\[
\wt A^{(r)}(t_o,z)=\Big(\frac{t_{o,r}\id^{(r)}}{z}+A_1^{\diago(r)}+zA^{(r)}_2+\cdots\Big)\frac{\rd z}{z}.
\]
Since $A_1^{\diago(r)}$ is non-resonant, there exists a base change $\id^{(r)}+zQ^{(r)}_1+\cdots$ that transforms $\wt A^{(r)}(t_o,z)$ to the $r$th block of $A^\diago(t_o,z)$.
\end{proof}

\begin{theoreme}\label{th:deformation}
Let $t_o\in\Delta$ and let $U$ be a connected open subset of $T$ satisfying \hbox{\ref{th:extension}\eqref{th:extensiona}--\eqref{th:extensionc}} with respect to $S(t_o)$. Under Assumptions \ref{lem:decformto}\eqref{lem:decformtoa} and \eqref{lem:decformtob} on $A_1$, there exists a holomorphic hypersurface~$\Theta$ in $U$ not containing $t_o$ and a holomorphic matrix $F_1^\nd(t)$ on $U\moins\Theta$, such that the meromorphic connection on the trivial bundle with matrix \eqref{eq:univintegr} is integrable, restricts to \eqref{eq:general} at $t_o$, and is formally equivalent at $z=0$ to the matrix connection
\[
-\rd(\Lambda(t)/z)+A_1^\diago\,\frac{\rd z}{z}.
\]
\end{theoreme}

\begin{proof}
The proof is very similar to that of \cite{Malgrange83cb, Malgrange86}. We set $T=\{z=0\}\subset X=\CC^n\times\CC_z$. We \hbox{denote} by~$\cN$ the $T$\nobreakdash-meromorphic flat bundle $\bigoplus_{i=1}^n(\cE^{-t_i/z}\otimes\cR_i)$, where $\cR_i\!=\!(\cO_X(*T),\rd+\nobreak A_1^\diago{}_{ii}\,\rd z/z)$. By Lemma \ref{lem:decformto}, \eqref{eq:general} defines an object $(\cM^{t_o},\iso_{\wh0})$ in $\cH_0(\gamma_{t_o}^+\cN)$. From Theorem \ref{th:extension} we deduce an $\cN$-marked $T$-meromorphic flat bundle $(\cM,\iso_{\wh T})$ on $U\times(\CC_z,0)$. We can now apply \cite[Th.\,VI.2.1]{Bibi00b} and obtain a hypersurface $\Theta\subset U$ and a basis~$\epsilong$ of $\cM(*(\Theta\times\CC_z))$ in which the matrix of the integrable connection takes the form
\[
\Big(\frac{A_0(t)}{z}+A_1\Big)\frac{\rd z}{z}+\frac{C(t)}{z}
\]
with $A_0(t)$ conjugate to $\Lambda(t)$ for each $t\in U\moins\Theta$ and $C(t)\in\Gamma(U,\Omega^1_T(*\Theta))$. One can then apply the results in \cite[\S VI.3.f]{Bibi00b}, since the regularity property of~$\Lambda(t)$ (\ie the fact that the eigenvalues are pairwise distinct) is not needed at this point. The change of notation with respect to \loccit\ is as follows: $\Delta_0$ is $\Lambda(t)$, $\Delta_\infty$ is $A_1^\diago$, $\tau$ is $z$ and the non-diagonal part $T^\nd$ is $F_1^\nd(t)$.
\end{proof}

\subsection{Application to the construction of Frobenius manifolds}
It is known (\cf \cite[Th.\,3.2,\,p.\,223]{Dubrovin96}) that, given suitable initial data consisting of a diagonal matrix $\Lambda(t_o)$ with pairwise distinct eigenvalues (\ie $t_o\notin\Delta$), of a matrix~$A_1$ such that $A_1-(w/2)\id$ is skew-symmetric for some integer $w$, and an eigenvector~$\omega_o$ of~$A_1$ that has no zero entry, one can construct a Frobenius manifold structure on the complement of some hypersurface $\Theta$ in the universal covering $\wt{T\moins\Delta}$ (\cf also \cite[\S VII.4.a]{Bibi00b}). It is the universal model at a semisimple point of a Frobenius manifold.

Theorem \ref{th:deformation} enables us to relax the regularity assumption on $\Lambda(t_o)$, provided that Assumptions \ref{lem:decformto}\eqref{lem:decformtoa} and \eqref{lem:decformtob} are fulfilled for $A_1$. Let $t_o\in\Delta$ and set $S_o=S(t_o)$. Assume that $(\Lambda(t_o),A_1,\omega_o)$ satisfy the following properties.
\begin{enumeratea}
\item\label{enum:a}
$A_1^\nd\in\im\ad(\Lambda(t_o))$,
\item\label{enum:b}
$A_1-(w/2)\id$ is skew-symmetric (so that $A_1^\diago=(w/2)\id$ is non-resonant),
\item\label{enum:c}
$\omega_o$ is an eigenvector of $A_1$ whose entries are all nonzero.
\end{enumeratea}
Let $U$ be an open subset of $T$ containing $S_o$ and satisfying \ref{th:extension}\eqref{th:extensionb} and \eqref{th:extensionc}, and let~$\wt U$ be its universal covering. By Theorem \ref{th:deformation}, there exists an integrable deformation of the Birkhoff normal form \eqref{eq:general}, which exists on $\wt U\moins\Theta$ for some complex hypersurface~$\Theta$ of $\wt U$. The vector $\omega_o$ can be extended flatly as a vector function $\omega$ on~$\wt U$, meromorphic along $\Theta$ and its entries do not vanish away from some hypersurface~$\Theta_{\omega_o}$. To $\omega$ is then associated an infinitesimal period mapping (\cf\cite[\S VII\,3.a]{Bibi00b}).

\begin{corollaire}\label{cor:410}
Under Assumptions \eqref{enum:a}, \eqref{enum:b} and \eqref{enum:c} above, the infinitesimal period mapping associated with $\omega$ endows the manifold $\wt U\moins(\Theta\cup\Theta_{\omega_o})$ with a Frobenius structure.\qed
\end{corollaire}

\backmatter
\providecommand{\SortNoop}[1]{}\providecommand{\sortnoop}[1]{}\providecommand{\eprint}[1]{\href{http://arxiv.org/abs/#1}{\texttt{arXiv\string:\allowbreak#1}}}\providecommand{\hal}[1]{\href{https://hal.archives-ouvertes.fr/hal-#1}{\texttt{hal-#1}}}\providecommand{\tel}[1]{\href{https://hal.archives-ouvertes.fr/tel-#1}{\texttt{tel-#1}}}\providecommand{\doi}[1]{\href{http://dx.doi.org/#1}{\texttt{doi\string:\allowbreak#1}}}\providecommand{\eprint}[1]{\href{http://arxiv.org/abs/#1}{\texttt{arXiv\string:\allowbreak#1}}}\providecommand{\doi}[1]{\href{http://dx.doi.org/#1}{\texttt{doi\string:\allowbreak#1}}}
\providecommand{\bysame}{\leavevmode ---\ }
\providecommand{\og}{``}
\providecommand{\fg}{''}
\providecommand{\smfandname}{\&}
\providecommand{\smfedsname}{\'eds.}
\providecommand{\smfedname}{\'ed.}
\providecommand{\smfmastersthesisname}{M\'emoire}
\providecommand{\smfphdthesisname}{Th\`ese}

\bgroup\smaller\renewcommand{\baselinestretch}{.9}\normalfont
\subsubsection*{Added on April 21, 2021}
Concerning question \eqref{enum:univb}, a new proof has been given by D.\,Guzzetti (Isomonodromic Laplace transform with coalescing eigenvalues and confluence of Fuchsian singularities, \eprint{2101.03397}), and another proof has later been given by the author (A short proof of a theorem of Cotti, Dubrovin and Guzzetti, \eprint{2103.16878}). Concerning Corollary \ref{cor:410}, another proof has been given by G.\,Cotti (Degenerate Riemann-Hilbert-Birkhoff problems, semisimplicity, and convergence of WDVV-potentials, \eprint{2011.04498}).
\egroup
\end{document}